\newtheorem{thm}{Theorem}[section]
\newtheorem{lem}[thm]{Lemma}
\newtheorem{rem}[thm]{Remark}
\newtheorem{prop}[thm]{Proposition}
\newtheorem{cor}[thm]{Corollary}
\newcommand{\GL}{\mathrm{GL}}
\newcommand{\SL}{\mathrm{SL}}
\DeclareMathOperator{\Gal}{Gal}
\definecolor{MyDarkBlue}{rgb}{0,0.08,0.45}
\definecolor{MyDarkBlue}{rgb}{0,0,0}
\begin{document}

\selectlanguage{british}

\title{Locally cyclic extensions with Galois group $\GL_2(p)$}
\author[1]{Sara Arias-de-Reyna\thanks{sara$\_$arias@us.es}}
\affil[1]{Departamento de \'Algebra, Universidad de Sevilla, Spain}
\author[2]{Joachim K\"onig\thanks{jkoenig@knue.ac.kr}}
\affil[2]{Korea National University of Education, Cheongju, South Korea}
\date{}
\maketitle

\begin{abstract}
Using Galois representations attached to elliptic curves, we construct Galois extensions of $\mathbb{Q}$ with group $GL_2(p)$ in which all decomposition groups are cyclic. This is the first such realization for all primes $p$.
\end{abstract}

\section{Introduction and main results}
%
%
Let $K/k$ be a Galois extension of number fields. We say that $K/k$ is \textit{locally cyclic}, if its decomposition group at every prime is cyclic. 

Locally cyclic extensions feature prominently in several areas of algebraic number theory.
 For example, they are useful for the solution of embedding problems in Galois theory. Recall that an embedding problem is the question whether, given a continuous epimorphism $\varphi: G_K\to G$ (corresponding to a Galois extension $L/K$ of Galois group $G$), and a short exact sequence of groups $1\to N\to \Gamma\stackrel{\pi}{\to} G\to 1$, one can find an epimorphism $\psi:G_k\to \Gamma$ such that $\pi\circ \psi = \varphi$ (thus embedding $L/K$ into a Galois extension of $K$ with group $\Gamma$). If, in this setting, $N$ is a central normal subgroup of $\Gamma$, then there is a  local-global principle for such an embedding problem, tying the solvability directly to the local behavior of the primes ramifying in $L/K$, and thus making it desirable to have ``easy" (such as, cyclic) decomposition groups. See \cite[\S 5]{Neukirch1973} for concrete results in this direction.
 On a related note, Shafarevich's famous realization of all solvable groups as Galois groups over $\mathbb{Q}$ makes use of specific kinds of locally cyclic extensions to solve certain \textit{split} embedding problems, and does in particular achieve existence of locally cyclic realizations for all solvable groups (see \cite[Theorem 9.6.7]{NSW}).

 For further applications, notably to weak approximation principle on norm-one tori, and to ``intersective polynomials", see \cite[\S 1]{KimKoenig}.

It is then natural to ask whether there exist (infinitely many) locally cyclic Galois extensions of $\mathbb{Q}$ with any given Galois group $G$.
The purpose of this paper is to provide a positive answer for all groups $GL_2(p)$ (and consequently, also for all the almost-simple groups $PGL_2(p)$).  Concretely, we show:

\begin{thm}
\label{thm:1}
Let $p\ge 5$ be a prime. Then there are infinitely many locally cyclic Galois extensions of $\mathbb{Q}$ with Galois group $GL_2(p)$. 
\end{thm}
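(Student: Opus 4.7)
My plan is to realise each $\GL_2(p)$-extension as the mod-$p$ division field $\QQ(E[p])/\QQ$ of a carefully chosen elliptic curve $E/\QQ$. By Serre's open image theorem, for $p\ge 5$ a generic $E/\QQ$ has surjective mod-$p$ Galois representation $\rho_{E,p}\colon G_\QQ \twoheadrightarrow \GL_2(\mathbb{F}_p)$, giving $\Gal(\QQ(E[p])/\QQ)\cong \GL_2(p)$. The theorem thus reduces to producing infinitely many such $E$ for which the image of every local decomposition group $G_{\QQ_\ell}$ in $\GL_2(\mathbb{F}_p)$ is cyclic.

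I would split the local analysis by reduction type. At primes $\ell\ne p$ of good reduction, $\rho_{E,p}$ is unramified and the decomposition image is cyclic (generated by a Frobenius element). At $\ell\ne p$ of multiplicative reduction, Tate uniformisation places the image in an upper-triangular Borel; inertia contributes a non-trivial unipotent exactly when $p\nmid v_\ell(\Delta_E)$, and in that case the decomposition image contains a diagonal element with distinct eigenvalues together with a non-trivial unipotent, which is generally non-abelian and hence not cyclic. I would therefore insist that $E$ be semistable and that $p\mid v_\ell(\Delta_E)$ at every bad prime $\ell\ne p$, both being congruence conditions on the Weierstrass coefficients of $E$.

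The decisive local condition is at $\ell=p$, where $\rho_{E,p}|_{G_{\QQ_p}}$ must be analysed via $p$-adic Hodge theory. In the ordinary good reduction case the image lies in a Borel subgroup, with diagonal characters $\chi_p\bar\alpha^{-1}$ and $\bar\alpha$, where $\bar\alpha$ is the unramified character corresponding to the mod-$p$ reduction of the unit Frobenius eigenvalue; a short direct computation in the Borel shows the decomposition image is cyclic iff the local representation is semisimple and $\bar\alpha=1$, equivalently $a_p(E)\equiv 1\pmod p$. In the supersingular case the image of inertia is a non-split Cartan and, for non-CM $E$, Frobenius lies in the non-trivial coset of its normaliser, producing a non-cyclic dihedral-type image. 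I would therefore target the ordinary case and design a one-parameter family of elliptic curves $E_t$ over $\QQ(t)$ on which $a_p(E_t)\equiv 1\pmod p$ and on which the local representation at $p$ is semisimple by construction.

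To conclude, I would apply Hilbert's irreducibility theorem to the cover $Y(p)\to Y(1)$ restricted to the family in order to secure surjectivity of $\rho_{E_t,p}$ for a Hilbert subset of parameters $t\in\QQ$, and then a sieve argument to select infinitely many such $t$ for which the auxiliary divisibility $p\mid v_\ell(\Delta_{E_t})$ holds at every bad prime $\ell\ne p$. Varying $t$ then yields infinitely many pairwise non-isomorphic $E_t$ and correspondingly infinitely many locally cyclic $\GL_2(p)$-extensions. The main technical obstacle, I believe, is the condition at $p$: simultaneously arranging $a_p(E)\equiv 1\pmod p$, semisimplicity of $\rho_{E,p}|_{G_{\QQ_p}}$, and surjectivity of $\rho_{E,p}$ is delicate, since these three are pulled back from different modular covers, and designing a rational family on which all of them are compatible is likely the crux of the proof.
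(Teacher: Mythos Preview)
Your overall strategy---realising the extension as $\QQ(E[p])$ for a suitable semistable $E/\QQ$ and analysing decomposition groups via reduction type---matches the paper's. However, your technical choices diverge from the paper's at the two crucial points, and in both places your proposal has a genuine gap.

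\textbf{At primes $\ell\ne p$ of bad reduction.} You want $p\mid v_\ell(\Delta_E)$ so that inertia acts trivially. This is \emph{not} a congruence condition on the Weierstrass coefficients: it asks that $\Delta_E$ be, up to its $p$-part, a perfect $p$-th power. For a one-parameter family with discriminant polynomial $\Delta(t)$, the set of integers $t$ with this property is far too thin to be produced by a sieve unless the family has been specifically engineered so that $\Delta(t)$ is already a $p$-th power times a fixed constant; your ``sieve argument'' as stated will not yield infinitely many such $t$. The paper takes the dual route: rather than killing the unipotent, it kills the diagonal by forcing every bad prime $\ell\ne p$ to satisfy $\ell\equiv 1\pmod p$, so that $\chi_p|_{G_{\QQ_\ell}}$ is trivial and the Borel image collapses to a cyclic unipotent. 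Concretely, $E$ is written as $y^2=(x-a)(x-b)(x-c)$ with $b-a$, $c-a$, and $(b-c)/(16p^p)$ simultaneously prime and $\equiv 1\pmod p$; the existence of infinitely many such triples is obtained from the Green--Tao theorem on primes represented by affine-linear forms.

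\textbf{At $p$.} You aim for good ordinary reduction with $a_p\equiv 1\pmod p$ and semisimple $\rho_{E,p}|_{G_{\QQ_p}}$, and you acknowledge that arranging this is ``likely the crux''. Indeed it is, and you do not resolve it: for ordinary $E/\QQ_p$ with unit Frobenius eigenvalue $\equiv 1$, the local representation is a peu ramifi\'ee extension of $1$ by $\chi_p$, and splitting is a codimension-one $p$-adic condition (essentially that $E$ be the Serre--Tate canonical lift of its reduction) with no evident way to impose it across a rational family while retaining Hilbert-generic surjectivity. The paper avoids this entirely by taking \emph{multiplicative} reduction at $p$ and arranging, via explicit congruences on $a,b,c$ modulo $p^{4p+1}$, that the Tate parameter $q$ is a $p$-th power in $\QQ_p^\times$; then the local image is $\chi_p(G_{\QQ_p})$, cyclic of order $p-1$, by a direct computation with Tate uniformisation.

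Finally, the paper does not use Hilbert irreducibility for surjectivity. Since the constructed $E$ is semistable with full rational $2$-torsion, non-surjectivity of $\rho_{E,p}$ would (by Serre's analysis in the semistable case) force a rational $p$-isogeny, hence an elliptic curve over $\QQ$ with torsion containing $\ZZ/2p\ZZ\times\ZZ/2\ZZ$, contradicting Mazur's torsion theorem for $p\ge 5$.
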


In fact, we will show more strongly that the infinitely many extensions in Theorem \ref{thm:1} can be chosen pairwise linearly disjoint over $\mathbb{Q}(\sqrt{p^\star}) := \mathbb{Q}(\sqrt{(-1)^{(p-1)/2}p})$, see Corollary \ref{thm:disj}.

Locally cyclic extensions of $\mathbb{Q}$ have been long known to exist for solvable groups, due to Shafarevich's method.\footnote{In particular, Theorem \ref{thm:1} is already known for $p=3$, since $GL_2(3)$ is solvable.} In stark contrast to this, there were however, until recently, no examples at all of nonsolvable groups occurring as the Galois group of infinitely many locally cyclic Galois extensions of $\mathbb{Q}$. \footnote{At least, apart from trivial constructions such as using the same nonsolvable subextension and extending it by infinitely many distinct solvable ones.}
The first locally cyclic realizations were obtained in \cite{KimKoenig} for groups such as $G=S_5$ and $G=PGL_2(7)$, as well as certain central extensions of these. 


In \cite{Koe21}, infinitely many of the groups $PGL_2(p)$ were realized as Galois groups of (infinitely many) locally cyclic extensions of $\mathbb{Q}$; however the majority (in a density sense when counting primes $p$) of those groups was still left open. This is, in part, due to the fact that previous approaches rested on specialization of $\mathbb{Q}$-regular Galois extensions of $\mathbb{Q}(t)$, an approach which has been fruitful for many problems in inverse Galois theory, but which also has certain limitations in practice.

Here, we approach the problem via constructing suitable elliptic curves and analyzing the local behavior of the $p$-division fields. This allows us to deal with all groups $GL_2(p)$ (and consequently also $PGL_2(p)$) simultaneously.
More precisely, we will consider semistable elliptic curves. The theory of Tate curves provides an explicit description of the action of the inertia group at the ramified primes on the $p$-torsion points of the elliptic curve. 

In particular, we will be interested in elliptic curves with multiplicative reduction at the prime $p$, and such that all the other primes of bad reduction are congruent to $1$ modulo $p$. To prove the existence of infinitely many such elliptic curves, we will make use of a deep result of Green and Tao on the representability of primes by linear equations.

The contents of this paper are organised as follows. First of all, we collect in Section \ref{sec:Tate_Curve} the necessary results concerning Tate curves. In Section \ref{sec:change_of_variables}, we perform a suitable change of variables on a Tate curve to obtain a Weierstrass equation with a convenient shape. The next section is devoted to writing explicit conditions modulo a suitable power of $p$ and $2$ that ensure that the field of $p$-division of $E$ has the locally cyclic property at these primes. Then we show how to apply the theorem of Green and Tao to construct the desired elliptic curves. In particular, the 
assertion of Theorem \ref{thm:1} follows from Corollary \ref{cor:exist}. Finally, in Section \ref{sec:twisting} we show how to twist the solutions obtained in the previous section to ensure linear disjointness over $\mathbb{Q}(\sqrt{p}^\star)$.

\bigskip

\textbf{Acknowledgements}

S. Arias-de-Reyna was supported by projects US-1262169 (Junta de Andaluc\'ia and FEDER, UE) and PID2020-114613GB-I00 P (Ministerio de Ciencia e Innovaci\'on, Spain).


\section{The Tate Curve}\label{sec:Tate_Curve}

Let $p$ be a prime number. In this section we will recall some well-known facts about Tate curves (see e.g. \cite[Chapter V, Theorem 3.1]{AdvancedTopics}) over the field of $p$-adic numbers $\mathbb{Q}_p$, and expand on some results on the $p$-division fields of such curves and on explicit computations on the $2$-torsion points that will be useful for us in the next sections.

Denote by $\vert \cdot \vert_p$ the $p$-adic absolute value. To each $q\in \mathbb{Q}_p^{\times}$ satisfying $\vert q \vert_p<1$, we can attach an elliptic curve defined over $\mathbb{Q}_p$ in the following way. 

For each $k\in \mathbb{N}$, define the following quantities: 
\begin{equation}\label{eq:Tate} s_k(q)=\sum_{n\geq 1} \frac{n^kq^n}{1-q^n}, \quad a_4(q)=-s_3(q), \quad a_6(q)=-\frac{5s_3(q) + 7s_5(q)}{12}.\end{equation}

Note that the condition $\vert q \vert_p<1$ implies that the series $s_k(q)$ converge in $\mathbb{Q}_p$ for all $k\in \mathbb{N}$.

We further define the power series
\begin{equation}\label{eq:series}
\begin{cases}
 X(u, q)=\sum_{n\in \mathbb{Z}}\frac{q^nu}{(1- q^n u)^2} - 2s_1(q);\\
 Y(u, q)=\sum_{n\in \mathbb{Z}}\frac{(q^nu)^2}{(1 - q^nu)^3} + s_1(q).\\
 \end{cases}
\end{equation}

Note that these series converge for all $q$ with $\vert q \vert_p<1$ and all $u\in \overline{\mathbb{Q}}_p$, $u\not\in q^{\mathbb{Z}}=\{q^n: n\in \mathbb{Z}\}$.

\begin{thm}\label{thm:Tate_curve} Let $\vert q \vert_p<1$, and let $E_q$ be the plane projective curve defined over $\mathbb{Q}_p$ by the (affine) Weierstrass equation $y^2 + xy=x^3 + a_4(q) x + a_6(q)$.
Then the following hold:

\begin{enumerate}
 \item The curve $E_q$ is an elliptic curve. 
 
 \item The map 
\begin{equation}\label{eq:Tate-isomorphism}
\begin{aligned} \phi: \overline{\mathbb{Q}}_p^{\times} &\rightarrow E_q(\overline{\mathbb{Q}}_p)\\
 u & \mapsto\begin{cases}
             \begin{aligned}&(X(u, q), Y(u, q)) &\text{ if }u\not \in q^{\mathbb{Z}}\\
             &\mathcal{O} &\text{ if }u\in q^{\mathbb{Z}}\\\end{aligned}
            \end{cases}
\end{aligned}
\end{equation}
is a surjective group homomorphism with kernel $q^{\mathbb{Z}}$.
 
\item For all $\sigma\in \Gal(\overline{\mathbb{Q}}_p/\mathbb{Q}_p)$ and all $u\in \overline{\mathbb{Q}}_p$, 
\begin{equation*}
 \phi(\sigma(u))=\sigma(\phi(u)).
\end{equation*}

\end{enumerate}
 
\end{thm}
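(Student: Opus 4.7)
The theorem is Tate's classical uniformisation theorem, and the natural strategy is to mimic the complex-analytic proof (parametrisation of $\mathbb{C}^\times/q^{\mathbb{Z}}$ by $\wp$-type series) while being careful that all the required identities remain valid in the $p$-adic setting. Before anything else I would fix the underlying formal framework: regard $X(u,q)$, $Y(u,q)$, $a_4(q)$, $a_6(q)$ as elements of a suitable Laurent extension of $\mathbb{Z}[[q]]$, and observe that, for $|q|_p<1$ and $u\in\overline{\mathbb{Q}}_p\setminus q^{\mathbb{Z}}$, all of these series converge. The point of this set-up is that any polynomial identity among these series verified formally in $\mathbb{Z}[[q]][u^{\pm 1}]$ will automatically transfer to $\mathbb{Q}_p$ by convergence.

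For part (1) I would carry out two checks. First, a formal computation shows that $(X(u,q),Y(u,q))$ satisfies the Weierstrass equation $y^2+xy=x^3+a_4(q)x+a_6(q)$; this is a purely formal identity in $u,q$ that can be verified by expanding both sides and comparing coefficients. Second, I would compute the discriminant of $E_q$ via the standard $b_i$-recipe to get the classical product expansion $\Delta(E_q)=q\prod_{n\geq 1}(1-q^n)^{24}$. Since $|q|_p<1$ the factors $1-q^n$ are $p$-adic units, so $\Delta(E_q)$ is a $p$-adic unit times $q$ and in particular nonzero; hence $E_q$ is smooth, i.e.\ an elliptic curve.

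For part (2) the homomorphism property follows from another formal identity expressing $X(u_1u_2,q)$, $Y(u_1u_2,q)$ as the rational expressions in $X(u_i,q),Y(u_i,q)$ prescribed by the Weierstrass addition law; again this is verified as a formal identity in $\mathbb{Z}[[q]][u_1^{\pm 1},u_2^{\pm 1}]$ and then transported by convergence. That $q^{\mathbb{Z}}$ lies in the kernel is immediate from the invariance of the series under $u\mapsto qu$, and the reverse inclusion follows from the description of the $u$-dependence of $X,Y$ near $u=1$. Surjectivity is the main obstacle. I would tackle it in two stages: first, use the formal group of $E_q$ to show that $\{u\in\overline{\mathbb{Q}}_p^\times:|u-1|_p<1\}$ maps onto a formal neighbourhood of $\mathcal{O}$; second, for each $N$ coprime to $p$ show that the $N$-torsion of $\overline{\mathbb{Q}}_p^\times/q^{\mathbb{Z}}$, generated by $N$-th roots of unity together with any $N$-th root of $q$, has cardinality exactly $N^2=\#E_q[N]$, so that the injective map on $N$-torsion is a bijection. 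Combining both inputs and taking limits yields surjectivity.

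Finally, part (3) is essentially free: the coefficients of the series defining $a_4,a_6,X,Y$ all lie in $\mathbb{Q}_p$, so any $\sigma\in\Gal(\overline{\mathbb{Q}}_p/\mathbb{Q}_p)$ commutes term-by-term with the convergent infinite sums, giving $\phi(\sigma u)=\sigma\phi(u)$. The genuinely non-formal ingredient is the surjectivity in (2), where one must exploit the specifically $p$-adic structure (the formal group and torsion count) rather than purely formal algebra; the remainder of the proof is a disciplined application of the principle that formal identities over $\mathbb{Z}[[q]]$ persist wherever they converge.
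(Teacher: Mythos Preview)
The paper does not prove this theorem. It is stated as a well-known result with a reference to Silverman's \emph{Advanced Topics in the Arithmetic of Elliptic Curves}, Chapter~V, Theorem~3.1, and no proof is given. So there is no ``paper's own proof'' to compare against.

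That said, your sketch is broadly in line with the classical argument found in Silverman. A couple of remarks. Your surjectivity argument is the weakest link: the combination of ``formal neighbourhood of $\mathcal{O}$'' plus ``$N$-torsion for $N$ prime to $p$'' does not obviously cover all of $E_q(\overline{\mathbb{Q}}_p)$, and ``taking limits'' is not a step one can make precise without more work (you would at least need to handle $p$-power torsion and then argue that torsion is dense in a suitable sense, which over $\overline{\mathbb{Q}}_p$ is not automatic). The standard reference instead proves surjectivity by a direct valuation argument: given $(x,y)\in E_q(\overline{\mathbb{Q}}_p)$, one inverts the leading term $X(u,q)\approx u/(1-u)^2$ to find a candidate $u$, and then shows the residual error can be absorbed. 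Apart from that, your outline for (1) and (3) is correct, and the homomorphism and kernel parts of (2) are as you describe.
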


The previous theorem provides us with a group isomorphism (which we denote also by $\phi$ to simplify notation)
\begin{equation*}\phi: \overline{\mathbb{Q}}_p^{\times}/q^{\mathbb{Z}}\rightarrow E_q(\overline{\mathbb{Q}}_p)\end{equation*}
which is compatible with the Galois action. 

\subsection{$\ell$-torsion representation of $E_q$}\label{subsec:ell_torsion}

Let $\ell$ be a prime number (which can be equal to $p$ or different from $p$). The isomorphism above induces an isomorphism of $\mathrm{Gal}(\overline{\mathbb{Q}}_p/\mathbb{Q}_p)$-modules between the subgroup of $\ell$-torsion of $\overline{\mathbb{Q}}_p^{\times}/q^{\mathbb{Z}}$, which we will denote by $W_{\ell}$, and $E_q[\ell]$. 

Let us analyze the structure of $W_{\ell}$. For an element $u\in \overline{\mathbb{Q}}_p^{\times}$, denote by $[u]$ its class in $\overline{\mathbb{Q}}_p^{\times}/q^{\mathbb{Z}}$. We have the following result (cf. \cite[Appendix A.1.2, pag.~IV-31]{SerreAbelianl-adic}, \cite[1.12]{SerreProprietes}).

\begin{prop} We have an exact sequence of $\Gal(\overline{\mathbb{Q}}_p/\mathbb{Q}_p)$-modules, 
\begin{equation*}
 \xymatrix{0\ar[r]&\mu_{\ell}\ar[r]^{\psi_1}&  W_{\ell} \ar[r]^{\psi_2} & \mathbb{Z}/\ell\mathbb{Z} \ar[r]&  0,}
\end{equation*}
where $\mu_{\ell}$ denotes the group of $\ell$-th roots of unity and where the action of  $\Gal(\overline{\mathbb{Q}}_p/\mathbb{Q}_p)$ on $\mu_{\ell}$ occurs via the cyclotomic character and the action on $\mathbb{Z}/\ell\mathbb{Z}$ is the trivial action.
 \end{prop}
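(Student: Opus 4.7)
The plan is to describe $W_\ell$ very concretely. An element $[u] \in \overline{\mathbb{Q}}_p^{\times}/q^{\mathbb{Z}}$ lies in $W_\ell$ if and only if there exists $n \in \mathbb{Z}$ with $u^\ell = q^n$, and this integer $n$ is unique modulo $\ell$ (because replacing $u$ by $uq^m$ changes $n$ by $m\ell$). This suggests defining $\psi_2\colon W_\ell \to \mathbb{Z}/\ell\mathbb{Z}$ by $\psi_2([u]) = n \bmod \ell$, and $\psi_1\colon \mu_\ell \to W_\ell$ simply as the restriction of the projection $\overline{\mathbb{Q}}_p^\times \to \overline{\mathbb{Q}}_p^\times/q^{\mathbb{Z}}$ to the $\ell$-th roots of unity.

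Once these maps are on the table, exactness reduces to three short checks. First, $\psi_1$ is injective because $\mu_\ell \cap q^{\mathbb{Z}} = \{1\}$: indeed $|q|_p < 1$ forces $|q^n|_p = |q|_p^n \ne 1$ for every $n \ne 0$, so no nontrivial power of $q$ is a root of unity. Second, the image of $\psi_1$ equals the kernel of $\psi_2$: if $\psi_2([u]) = 0$ then $u^\ell = q^{m\ell}$ for some $m$, whence $(u/q^m)^\ell = 1$, so $[u] = [u/q^m]$ comes from $\mu_\ell$; conversely any $\zeta \in \mu_\ell$ satisfies $\zeta^\ell = 1 = q^0$. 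Third, $\psi_2$ is surjective: fixing any $\ell$-th root $q^{1/\ell} \in \overline{\mathbb{Q}}_p$, the class $[q^{1/\ell}]$ maps to $1 \bmod \ell$.

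It remains to verify Galois equivariance, which is where one should be a little careful but no serious obstacle is expected. The map $\psi_1$ is equivariant by construction, and the action on $\mu_\ell$ is the cyclotomic character by definition of the latter. For $\psi_2$, note that for $\sigma \in \Gal(\overline{\mathbb{Q}}_p/\mathbb{Q}_p)$ and $u^\ell = q^n$ we have $(\sigma u)^\ell = \sigma(u^\ell) = \sigma(q^n) = q^n$, since $q \in \mathbb{Q}_p$ is fixed by $\sigma$. Hence $\psi_2([\sigma u]) = n = \psi_2([u])$, i.e.\ the target $\mathbb{Z}/\ell\mathbb{Z}$ indeed carries the trivial Galois action.

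The only conceptual subtlety, and the closest thing to a genuine obstacle, is the well-definedness of $\psi_2$ on classes, together with the observation that $|q|_p < 1$ rules out any collision between $\mu_\ell$ and $q^{\mathbb{Z}}$; both points rely essentially on $q$ being non-torsion in $\overline{\mathbb{Q}}_p^\times$, which is precisely the hypothesis we have. Everything else is formal.
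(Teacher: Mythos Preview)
Your proposal is correct and follows exactly the approach indicated in the paper: the maps $\psi_1$ and $\psi_2$ you define are precisely the ones the paper describes immediately after the proposition, and the paper simply asserts that the verifications are ``easy to check'' (with a reference to Serre), whereas you carry them out in full. There is no substantive difference in strategy.
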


 In the proposition above, the map $\psi_1$ is just the composition of the inclusion $\mu_{\ell}\hookrightarrow \overline{\mathbb{Q}}_p^{\times}$ and the projection $\overline{\mathbb{Q}}_p^{\times}\rightarrow \overline{\mathbb{Q}}_p^{\times}/q^{\mathbb{Z}}$. The map $\psi_2$ can be defined as follows: for each $[u]\in  W_{\ell}$, we have that $[u]^{\ell}=[1]$, that is to say, there exists $n\in \mathbb{Z}$ such that $u^{\ell}=q^n$. Define $\psi_2([u])=n + \ell\mathbb{Z}\in \mathbb{Z}/\ell\mathbb{Z}$. It is easy to check that the map $\psi_2$ is a well-defined surjective group homomorphism. 
 
Choose an $\ell$-th root of $q$, say $q^{1/\ell}$, and a primitive $\ell$-th root of unity $\zeta_{\ell}$. Then the pair $\{\zeta_{\ell}, q^{1/\ell}\}$ is a basis of the $\mathbb{Z}$-module $W_{\ell}$. If we denote by $\rho_{\ell}:\Gal(\overline{\mathbb{Q}}_p/\mathbb{Q}_p)\rightarrow \GL(W_{\ell})$ the Galois representation induced by the Galois action on $W_{\ell}$, with respect to this basis we can write it in the shape
\begin{equation*}
\rho_{\ell}= \begin{pmatrix} \chi_{\ell} & * \\ 0 & 1\end{pmatrix}
\end{equation*}

If furthermore, $q^{1/\ell}\in \mathbb{Q}_p$, then $\rho_{\ell}$ can be written as
\begin{equation*}
 \rho_{\ell}=\begin{pmatrix} \chi_{\ell} & 0 \\ 0 & 1\end{pmatrix}
\end{equation*}
with respect to this basis. Therefore, its image is a cyclic group of order $\ell-1$.

In the conditions of Theorem \ref{thm:Tate_curve}, denote by $\rho_{E_q, \ell}$ the Galois representation attached to the $\ell$-torsion points of $E_q$. 
From the discussion above, we obtain the following two useful consequences:

\begin{cor}\label{cor:1modp}
 Let $\ell$ be a prime such that $p\equiv 1 \pmod{\ell}$. Then the image of $\Gal(\overline{\mathbb{Q}}_p/\mathbb{Q}_p)$ by $\rho_{E_q, \ell}$ is either trivial or a cyclic group of order $\ell$.
\end{cor}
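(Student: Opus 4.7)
The plan is to read off the conclusion directly from the explicit block-triangular shape of $\rho_\ell$ on $W_\ell$ recalled just before the corollary, after checking that the assumption $p\equiv 1\pmod{\ell}$ forces the upper-left character to be trivial.

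First I would observe that the hypothesis $p \equiv 1 \pmod{\ell}$ excludes the possibility $\ell = p$ (otherwise $p \mid p-1$, absurd), so we are in the case $\ell \neq p$. Then $\ell \mid p-1$ implies that $\mathbb{Q}_p^{\times}$ already contains the full group $\mu_\ell$ of $\ell$-th roots of unity (this is the standard fact that $\ell$-th roots of unity lie in $\mathbb{Q}_p$ iff $\ell \mid p-1$, coming from Hensel's lemma applied to $X^\ell - 1$). Consequently the mod-$\ell$ cyclotomic character $\chi_\ell \colon \Gal(\overline{\mathbb{Q}}_p/\mathbb{Q}_p) \to (\mathbb{Z}/\ell\mathbb{Z})^\times$ is trivial.

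Next I would transport this back to $\rho_{E_q,\ell}$ via the Galois-equivariant isomorphism $W_\ell \cong E_q[\ell]$ afforded by Theorem \ref{thm:Tate_curve}. In the basis $\{\zeta_\ell, q^{1/\ell}\}$ of $W_\ell$ introduced above, we have
\begin{equation*}
\rho_{E_q,\ell} \;=\; \begin{pmatrix} \chi_\ell & * \\ 0 & 1 \end{pmatrix} \;=\; \begin{pmatrix} 1 & * \\ 0 & 1 \end{pmatrix},
\end{equation*}
so the image lands in the unipotent subgroup $U = \bigl\{\bigl(\begin{smallmatrix}1 & b\\ 0 & 1\end{smallmatrix}\bigr) : b \in \mathbb{Z}/\ell\mathbb{Z}\bigr\} \subset \GL_2(\mathbb{Z}/\ell\mathbb{Z})$, which is cyclic of prime order $\ell$. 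Since $\ell$ is prime, the image is either trivial or all of $U$, giving the two alternatives in the statement.

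There is no real obstacle here; the only mild care-points are (i) ruling out $\ell = p$ from the congruence and (ii) invoking the presence of $\mu_\ell$ in $\mathbb{Q}_p$ to kill $\chi_\ell$. Everything else is algebra inside the prescribed matrix form, and no analysis of the off-diagonal entry $*$ is required to reach the weak conclusion of the corollary.
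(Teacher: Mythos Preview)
Your proof is correct and follows essentially the same approach as the paper's one-line argument. The paper simply notes that $\chi_\ell(\Frob_p)\equiv p\equiv 1\pmod{\ell}$, while you argue via $\mu_\ell\subset\mathbb{Q}_p$; these are two phrasings of the same observation that $\chi_\ell$ is trivial, after which the upper-triangular matrix form immediately gives the conclusion.
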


\begin{proof}
 Note that $\chi_{\ell}(\mathrm{Frob}_p)\equiv p \equiv 1\pmod{\ell}$. 
\end{proof}

\begin{cor}\label{cor:cyclic_at_p} Let $p$ and $\ell$ be two primes (not necessarily different). Assume that $q^{1/\ell}\in \mathbb{Q}_p$. Then the image of $\Gal(\overline{\mathbb{Q}}_p/\mathbb{Q}_p)$ by $\rho_{E_q, \ell}$ is a cyclic group of order $\ell-1$.
\end{cor}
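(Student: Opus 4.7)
The plan is to read off the statement directly from the matrix form already established in the paragraph immediately preceding the corollary. The hypothesis $q^{1/\ell}\in\mathbb{Q}_p$ is precisely what makes the upper-right entry of $\rho_\ell$ vanish, since the Galois action on the basis vector $q^{1/\ell}$ of $W_\ell$ is then trivial, and the basis vector $\zeta_\ell$ transforms by $\chi_\ell$. So with respect to the basis $\{\zeta_\ell, q^{1/\ell}\}$ one has
\begin{equation*}
\rho_{\ell}=\begin{pmatrix} \chi_{\ell} & 0 \\ 0 & 1\end{pmatrix}.
\end{equation*}

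Next I would invoke the Galois-equivariant isomorphism $\phi: W_\ell \to E_q[\ell]$ furnished by Theorem \ref{thm:Tate_curve}(3), which identifies $\rho_\ell$ with $\rho_{E_q,\ell}$ up to conjugation. Hence the image of $\rho_{E_q,\ell}$ coincides, up to isomorphism, with the image of $\chi_\ell$ on $\Gal(\overline{\mathbb{Q}}_p/\mathbb{Q}_p)$; in particular it is cyclic, since it is a subgroup of $(\mathbb{Z}/\ell\mathbb{Z})^\times$.

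The last step is to identify the order. The image of $\chi_\ell$ is naturally isomorphic to $\Gal(\mathbb{Q}_p(\zeta_\ell)/\mathbb{Q}_p)$. In the principal case of interest here, namely $\ell = p$, the extension $\mathbb{Q}_p(\zeta_p)/\mathbb{Q}_p$ is totally (and tamely) ramified of degree $p-1$, so the image has order exactly $\ell-1$, as claimed. The main (in fact only) step that could be called an obstacle is making sure the two bases are compatible and that the $\mathbb{Z}$-module structure on $W_\ell$ inherited from $\overline{\mathbb{Q}}_p^\times/q^\mathbb{Z}$ really does turn the matrix form above into a description of the Galois image on $E_q[\ell]$; but this was already laid out in the discussion of $W_\ell$ just before the corollary, and nothing further is needed.
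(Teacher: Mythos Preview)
Your argument is essentially the paper's own: the corollary has no separate proof and is meant to be read off directly from the diagonal form $\rho_\ell = \begin{pmatrix}\chi_\ell & 0\\ 0 & 1\end{pmatrix}$ established in the paragraph immediately preceding it, together with the Galois-equivariance of $\phi$. Your caution in verifying the exact order $\ell-1$ only for $\ell = p$ is in fact warranted---for $\ell\ne p$ the image of $\chi_\ell$ on $\Gal(\overline{\mathbb{Q}}_p/\mathbb{Q}_p)$ has order equal to the multiplicative order of $p$ modulo $\ell$, which divides but need not equal $\ell-1$; the paper's phrasing is a slight overstatement in that generality, but the corollary is only ever applied with $\ell = p$, where your argument is complete.
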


Finally, we have the following result, that ensures that any elliptic curve over $\mathbb{Q}_p$ whose $j$-invariant has negative $p$-adic valuation is isomorphic to a Tate curve (cf.~\cite[Lemmas 5.1 and 5.2 of Chapter V and the proof of Lemma 5.1]{AdvancedTopics}):

\begin{lem}\label{lem:isomorphic} Let $E$ be an elliptic curve defined over a finite extension of $\mathbb{Q}_p$ with $j$-invariant $\alpha\in \overline{\mathbb{Q}}_p$ satisfying that $\vert \alpha\vert_p>1$. Then there exists $q\in \mathbb{Q}_p(\alpha)$ with $\vert q \vert_p<1$ such that $E$ is isomorphic to $E_q$ over an extension of $\mathbb{Q}_p(\alpha)$ of degree less than or equal to $2$.
More precisely, there exists a power series $g(z)=z + \cdots \in \mathbb{Z}[[z]]$ such that $q=g(1/\alpha)$.
\end{lem}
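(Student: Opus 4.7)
The plan is to use the classical $q$-expansion of the $j$-invariant of the Tate curve and to invert it as a formal power series with integer coefficients.

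First, I would recall (or compute directly from the expressions for $a_4(q)$ and $a_6(q)$ given in \eqref{eq:Tate}) the classical fact that the $j$-invariant of $E_q$ admits an expansion of the form
\begin{equation*}
j(E_q) \;=\; \frac{1}{q} + 744 + \sum_{n \ge 1} c_n q^n,\qquad c_n \in \mathbb{Z},
\end{equation*}
obtained by expanding the discriminant $\Delta(q)$ and the modular form $c_4(q)$ attached to the Weierstrass equation as $q$-series with integer coefficients. This is a standard piece of the theory and is the main input from \cite{AdvancedTopics}.

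Next, I would invert this relation formally. Writing $w = 1/\alpha$, the equation $j(E_q) = \alpha$ takes the form $q \bigl(1 + 744\,q + \sum_{n \ge 1} c_n q^{n+1}\bigr) = w$. The right-hand factor lies in $1 + q\mathbb{Z}[[q]]$, so by the formal implicit function theorem (equivalently, by iterative substitution) there is a unique formal power series $g(z) = z + 744\,z^2 + \cdots \in \mathbb{Z}[[z]]$ satisfying $j(E_{g(z)}) = 1/z$. Integrality is preserved at each step of the inversion because the power series being inverted has integer coefficients and leading coefficient $1$.

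Third, I would evaluate this series at $w = 1/\alpha$. Since $|\alpha|_p > 1$ we have $|1/\alpha|_p < 1$, and because $g \in z\mathbb{Z}[[z]]$ the series $q := g(1/\alpha)$ converges $p$-adically to an element of $\mathbb{Q}_p(\alpha)$ with $|q|_p = |1/\alpha|_p < 1$. By construction the Tate curve $E_q$ is defined over $\mathbb{Q}_p(\alpha)$ and has $j$-invariant equal to $\alpha = j(E)$.

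Finally, to pass from equality of $j$-invariants to isomorphism, I would invoke the standard fact that two elliptic curves over a field $K$ with the same $j$-invariant, provided this invariant is distinct from $0$ and $1728$, are quadratic twists of each other and in particular become isomorphic over an at most quadratic extension of $K$. The hypothesis $|\alpha|_p > 1$ immediately rules out $\alpha \in \{0, 1728\}$ (both are $p$-adic integers), so $E$ and $E_q$ are isomorphic over an extension of $\mathbb{Q}_p(\alpha)$ of degree at most $2$. The only real work lies in the second step -- establishing integrality of the coefficients of the inverse series $g$ -- but this is a routine consequence of the unit-leading-coefficient inversion and is already recorded in the references cited in the lemma statement.
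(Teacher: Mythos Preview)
Your argument is correct and is exactly the standard one the paper defers to by citing \cite[Chapter~V, Lemmas~5.1 and~5.2]{AdvancedTopics}; the paper itself gives no independent proof of this lemma. One small algebraic slip: from $j(E_q)=\alpha$ and $w=1/\alpha$ one gets $w\bigl(1+744q+\sum c_n q^{n+1}\bigr)=q$, not $q(\cdots)=w$ as you wrote, but this does not affect the conclusion that the inverse series $g(z)=z+744z^2+\cdots$ lies in $\mathbb{Z}[[z]]$.
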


We can be more precise about the relationship between the Galois representations attached to both elliptic curves (cf.~\cite[Lemmas 5.1 and 5.2  of Chapter V]{AdvancedTopics}). For simplicity we consider the case when both elliptic curves are defined over $\mathbb{Q}_p$.

\begin{lem}\label{lem:character}
 Let $E$ and $E'$ be two elliptic curves defined over $\mathbb{Q}_p$, isomorphic over a quadratic extension $L/\mathbb{Q}_p$. Let $\chi:\mathrm{Gal}(\overline{\mathbb{Q}}_p/\mathbb{Q}_p)\rightarrow \mathrm{Gal}(L/\mathbb{Q}_p)\rightarrow \{\pm 1\}$ be the quadratic character associated to $L/\mathbb{Q}_p$. Then there is an isomorphism $\psi:E\rightarrow E'$ such that $$\psi(P)^{\sigma}=\chi(\sigma)\psi(P^\sigma)\ \text{ for all }\sigma\in \mathrm{Gal}(\overline{\mathbb{Q}}_p/\mathbb{Q}_p) \text{ and all }P\in E(\overline{\mathbb{Q}}_p).$$ 
\end{lem}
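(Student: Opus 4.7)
The plan is to exhibit the isomorphism $\psi$ explicitly as a quadratic twist map and then verify the equivariance relation by a direct computation on Weierstrass coordinates.

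First I would use that, since $E$ and $E'$ become isomorphic over the quadratic extension $L = \mathbb{Q}_p(\sqrt{d})$ (for some non-square $d \in \mathbb{Q}_p^\times$), they must have the same $j$-invariant and are quadratic twists of one another. Putting $E$ in short Weierstrass form $y^2 = x^3 + Ax + B$ (for simplicity I assume $p\ne 2,3$; residue characteristics $2,3$ are handled by the analogous substitution on a generalised Weierstrass equation), one may take a $\mathbb{Q}_p$-model of $E'$ of the form $y^2 = x^3 + Ad^2 x + Bd^3$, and then the map
\begin{equation*}
 \psi \colon E \longrightarrow E', \qquad (x,y) \longmapsto (dx,\, d^{3/2}\, y)
\end{equation*}
is an $L$-rational isomorphism, since substituting $X = dx$ and $Y = d^{3/2}y$ transforms the defining equation of $E$ into that of $E'$.

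Next I would verify the Galois equivariance by direct computation. Any $\sigma\in \Gal(\overline{\mathbb{Q}}_p/\mathbb{Q}_p)$ fixes $d \in \mathbb{Q}_p$ but acts on $\sqrt{d}$ by $\sigma(\sqrt{d}) = \chi(\sigma)\sqrt{d}$, hence $\sigma(d^{3/2}) = \chi(\sigma)\, d^{3/2}$. For any $P = (x,y) \in E(\overline{\mathbb{Q}}_p)$, this gives
\begin{equation*}
 \psi(P)^\sigma = \bigl(d\, x^\sigma,\ \chi(\sigma)\, d^{3/2}\, y^\sigma\bigr) = \chi(\sigma)\cdot \bigl(d\, x^\sigma,\ d^{3/2}\, y^\sigma\bigr) = \chi(\sigma)\, \psi(P^\sigma),
\end{equation*}
where in the middle equality I use that $\chi(\sigma)\in\{\pm 1\}$ and that inversion on $E'$ in Weierstrass form is $(X,Y)\mapsto(X,-Y)$. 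The identity on the point at infinity is trivial.

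The only conceptual point I expect to be nontrivial is the initial reduction: namely, justifying that $E'$ admits a $\mathbb{Q}_p$-model of the claimed shape relative to the chosen model of $E$, i.e.\ that $E'$ is the specific quadratic twist of $E$ by $\chi$ rather than some other twist. This follows from the standard cohomological classification of twists, as $H^1(\Gal(L/\mathbb{Q}_p),\Aut_{\overline{\mathbb{Q}}_p}(E))$ parametrises $\mathbb{Q}_p$-forms of $E_L$ and contains $\Hom(\Gal(L/\mathbb{Q}_p),\{\pm 1\})$ canonically, with the class of $E'$ corresponding to $\chi$ by the hypothesis that $E$ and $E'$ split over $L$ and become isomorphic only there. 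Once this reduction is in place, the coordinate computation above is automatic.
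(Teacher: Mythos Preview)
The paper does not actually give its own proof of this lemma; it simply records the statement with a reference to \cite[Chapter V, Lemmas 5.1 and 5.2]{AdvancedTopics}. So there is nothing to compare your argument against on the paper's side, and your write-up would serve as a self-contained justification where the paper defers to Silverman.

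Your argument is correct in substance, but note that the logic is slightly back-to-front. The explicit coordinate computation with the model $y^2=x^3+Ad^2x+Bd^3$ presupposes that $E'$ is \emph{the} quadratic twist of $E$ by $d$, and you only justify this afterwards via the cohomological classification of twists. But that cohomological step already yields the lemma directly: for any $L$-isomorphism $\psi:E\to E'$, the map $\sigma\mapsto \psi^\sigma\psi^{-1}$ is a cocycle in $\Aut(E')$ factoring through $\Gal(L/\mathbb{Q}_p)\cong\mathbb{Z}/2\mathbb{Z}$; when $j\neq 0,1728$ one has $\Aut(E')=\{\pm 1\}$, so either $\psi$ is already $\mathbb{Q}_p$-rational or $\psi^\sigma=-\psi=\chi(\sigma)\psi$, which is precisely the claimed relation. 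The Weierstrass computation is then a pleasant sanity check rather than an independent step. Two small caveats: you tacitly assume $E$ and $E'$ are not already isomorphic over $\mathbb{Q}_p$ (your phrase ``become isomorphic only there''), which is indeed needed for the conclusion with nontrivial $\chi$ and is implicit in the paper's use of the lemma; and for $j=0,1728$ the automorphism group is larger, though the cocycle still has order dividing $2$ and the argument goes through with a little extra care --- in any case these $j$-values do not arise in the paper's application, where $|j|_p>1$.
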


Denote by  $\rho_{E, \ell}$ (resp.~$\rho_{E', \ell}$) the Galois representation attached to the $\ell$-torsion of $E$ (resp.~of $E'$). We have that $\rho_{E', \ell}=\chi\otimes \rho_{E, \ell}$. Therefore, if the image of $\rho_{E, \ell}$ is trivial or cyclic, the image of $\rho_{E', \ell}$ is also trivial or cyclic.

\subsection{$2$-torsion of $E_q$}\label{subsec:2-torsion}

Let $p$ be an odd prime number, let $q\in \mathbb{Q}_p$ with $\vert q \vert_p<1$, and $E_q/\mathbb{Q}_p$ the corresponding Tate curve, defined by the equation $y^2 + xy=x^3 + a_4(q) x + a_6(q)$.

For $i=1, 2, 3$, denote by $P_i=(x_i(q), y_i(q))$ its three nontrivial points of $2$-torsion, where $x_i(q), y_i(q)\in \overline{\mathbb{Q}}_p^{\times}$. 

To compute $E_q[2]$, we will use the isomorphism \eqref{eq:Tate-isomorphism}. First, we study the $2$-torsion of $\overline{\mathbb{Q}}_p^{\times}/q^{\mathbb{Z}}$. If $u\in \overline{\mathbb{Q}}_p^{\times}$ satisfies that $[u]^2=[1]$, then there exists $n\in \mathbb{Z}$ such that $u^2=q^n$. Fix a square root $q^{1/2}$ of $q$. Then we have the four elements $u=1, -1, q^{1/2}, -q^{1/2}$ satisfying that $[u]^2=[1]$. It can be easily checked that, since $\vert q\vert_p < 1$, the equivalence classes of these four elements in  $\overline{\mathbb{Q}}_p^{\times}/q^{\mathbb{Z}}$ are all different. We know that $E_q[2]$ has only four elements of $2$-torsion, thus the same must be true of $W_2$. Thus $W_2=\{[1], [-1], [q^{1/2}], [-q^{1/2}]\}$. By definition $\phi([1])=\mathcal{O}$. Relabelling the $2$-torsion points, we have that $\phi([-1])=P_1$, $\phi([q^{1/2}])=P_2$ and $\phi([-q^{1/2}])=P_3$. We can compute the first few terms of $x_i$, $i=1, 2, 3$, by using Equation \eqref{eq:series}. 

More precisely, we have that 
\begin{equation*}
 X(u, q)= \frac{u}{(1-u)^2} + \sum_{n\geq 1}\left(\frac{q^nu}{(1 - q^nu)^2} + \frac{q^n u^{-1}}{(1-q^nu^{-1})^2} - 2\frac{q^n}{(1- q^n)^2}\right)
\end{equation*}

Therefore 

\begin{equation*}\begin{aligned}
 x_1(q)=X(-1, q)&= \frac{-1}{4} + \sum_{n\geq 1}\left(\frac{-q^n}{(1 + q^n)^2} + \frac{-q^n}{(1+q^n)^2} - 2\frac{q^n}{(1- q^n)^2}\right)\\
         &=\frac{-1}{4} -2 \sum_{n\geq 1}\left(\frac{q^n}{(1+q^n)^2} + \frac{q^n}{(1- q^n)^2}\right)\\
         &=\frac{-1}{4} - 4\sum_{n\geq 1} \frac{1 + q^{2n}}{(1 - q^{2n})^2} q^n\\
         &=\frac{-1}{4}  - 4\frac{1 + q^2}{(1-q^2)^2} q  -\cdots\\
         &=\frac{-1}{4}  - 4 q + \mathcal{O}(q^2) 
\end{aligned}\end{equation*}

\begin{equation*}\begin{aligned}
 x_2(q)=X(q^{1/2}, q)&= \frac{q^{1/2}}{(1-q^{1/2})^2} + \sum_{n\geq 1}\left(\frac{q^{n + 1/2}}{(1 - q^{n+1/2})^2} + \frac{q^{n-1/2}}{(1-q^{n-1/2})^2} - 2\frac{q^n}{(1- q^n)^2}\right)\\
                  &= \frac{q^{1/2}}{(1-q^{1/2})^2} + \frac{q^{1/2}}{(1-q^{1/2})^2} - 2\frac{q}{(1- q)^2} + q^{3/2} \frac{1}{(1-q^{3/2})^2} +  q^{3/2} \frac{1}{(1-q^{3/2})^2}+\cdots\\
                  &= 2\frac{q^{1/2}}{(1-q^{1/2})^2}- 2\frac{q}{(1- q)^2} + 2  q^{3/2} \frac{1}{(1-q^{3/2})^2} + \cdots\\
                  &= 2q^{1/2} + 2q + 8 q^{3/2} + \mathcal{O}(q^2)
\end{aligned}\end{equation*}

\begin{equation*}\begin{aligned}
 x_3(q)=X(-q^{1/2}, q)&=  \frac{-q^{1/2}}{(1+q^{1/2})^2} + \sum_{n\geq 1}\left(\frac{-q^{n+1/2}}{(1 + q^{n+1/2})^2} + \frac{-q^{n-1/2}}{(1+q^{n-1/2})^2} - 2\frac{q^n}{(1- q^n)^2}\right)\\
                   &= \frac{-q^{1/2}}{(1+q^{1/2})^2} +\frac{-q^{1/2}}{(1+q^{1/2})^2} - 2\frac{q}{(1- q)^2} - q^{3/2} \frac{1}{(1+q^{3/2})^2} -  q^{3/2} \frac{1}{(1+q^{3/2})^2} +\cdots\\
                   &= 2\frac{-q^{1/2}}{(1+q^{1/2})^2} - 2\frac{q}{(1- q)^2} -  2q^{3/2} \frac{1}{(1+q^{3/2})^2}  + \cdots\\
                                     &= -2q^{1/2} + 2q - 8 q^{3/2} + \mathcal{O}(q^2)
\end{aligned}\end{equation*}

In the three equations above, the last line shows the terms with $p$-adic valuation smaller than or equal to $3/2$ times the valuation of $q$. Note that, if $q^{1/2}\in \mathbb{Z}_p$, then $x_1(q), x_2(q), x_3(q)$ belong to $\mathbb{Z}_p$.

\section{A change of variables}\label{sec:change_of_variables}

In this section we will write out an admissible change of variables, which transforms the Weierstrass equation defining a Tate curve into another, simpler Weierstrass equation, with the property that the difference of the $x$-coordinates of two pairs of non-trivial $2$-torsion points are congruent to $1$ modulo $p$.

Let $E_q/\mathbb{Q}_p$ be a Tate curve, defined by the equation $y^2 + xy=x^3 + a_4(q) x + a_6(q)$. Consider the change of variables
\begin{equation*}\label{eq:change_of_variables}
\begin{cases} x= \frac{1}{36}(x'-3)\\
 y=\frac{1}{2}(\frac{y'}{108}-\frac{x'-3}{36}) 
 \end{cases}
\end{equation*}

We obtain the new Weierstrass equation
\begin{equation*}
y^2=x^3  + (1296a_4(q) - 27)x + 46656a_6(q) - 3888a_4(q) + 54.
\end{equation*} 

The curve defined by this Weierstrass equation, say $E'_q$ is isomorphic (over $\mathbb{Q}_p$) to $E_q$. Let $(x_0, y_0)\in E_q(\overline{\mathbb{Q}}_p)$. Then we have that the point $(x'_0, y'_0)$ such that 
\begin{equation*}
\begin{cases} x_0= \frac{1}{36}(x'_0-3)\\
 y_0=\frac{1}{2}(\frac{y'_0}{108}-\frac{x'_0-3}{36}) 
 \end{cases}
\end{equation*}
belongs to $E'_q(\overline{\mathbb{Q}}_p)$. Therefore we can write $x'_0, y'_0$ in terms of $x_0, y_0$ as 

\begin{equation}\label{eq:newcoordinates}
\begin{cases} x'_0=36 x_0 + 3\\
 y'_0=108(2y_0 + x_0)
 \end{cases}
\end{equation}

In Section \ref{subsec:2-torsion} we computed the $x$-coordinate of the non-trivial $2$-torsion points of $E_q$, namely $x_1(q)$, $x_2(q)$ and $x_3(q)$. Applying the change of variables \eqref{eq:newcoordinates} we obtain the $x$-coordinates of the nontrivial $2$-torsion points of $E'_q$, which are the roots of the polynomial $x^3  + (1296a_4(q) - 27)x + 46656a_6(q) - 3888a_4(q) + 54$. 

Namely, we obtain that 
\begin{equation*}\begin{aligned}
 x'_1(q)&= 36 x_1(q) + 3=-6 - 144 q +\mathcal{O}(q^2) \cdots\\
 x'_2(q)&=36 x_2(q) + 3
                  = 3 + 72q^{1/2} + 72q + 288 q^{3/2} + \mathcal{O}(q^2) \\
 x_3(q)&= 36 x_3(q) + 3
                   = 3 - 72q^{1/2} + 72 q - 288 q^{3/2} +   \mathcal{O}(q^2)\\
                   \end{aligned}
\end{equation*}

Moreover, recall that if $q^{1/2}\in \mathbb{Z}_p$, then $x_1(q)$, $x_2(q)$ and $x_3(q)$ belong to $\mathbb{Z}_p$. 

In that case, we can write $x'_2(q) - x'_1(q)=  9 + 72q^{1/2}\alpha$, $x'_3(q) - x'_1(q)= 9 + 72q^{1/2}\beta$, $x'_2(q) - x'_3(q)=144q^{1/2}\gamma$, where $\alpha, \beta, \gamma\in \mathbb{Z}_p$ are $p$-adic integers congruent to $1$ modulo $p$.

We want that at least two of the differences are congruent to $1$ modulo $p$. Therefore, we make the further change of variables 
\begin{equation*}\label{eq:change_of_variables3}
\begin{cases} x'= 9x''\\
 y'=27y''\\ 
 \end{cases}
\end{equation*}

The new equation that we obtain is
\begin{equation}\label{eq:E2} y^2= x^3  + (16 a_4(q) - \frac{1}{3})x + 64a_6(q) - \frac{16}{3}a_4(q) + \frac{2}{27}\end{equation}
This is an elliptic curve, say $E''_q$, defined over $\mathbb{Q}_p$. If $(x'_0, y'_0)\in E'_q(\overline{\mathbb{Q}}_p)$, then we have that $(x''_0, y''_0)=(x'_0/9, y'_0/27)\in  E''_q(\overline{\mathbb{Q}}_p)$. The $x$-coordinates of the non-trivial $2$-torsion points of this elliptic curve are precisely $x'_1(q)/9$, $x'_2(q)/9$ and $x'_3(q)/9$.

\begin{equation*}\begin{aligned}
 x''_1(q)&= \frac{1}{9}(36 x_1(q) + 3)=-2/3  - 16q  +\mathcal{O}(q^2) 
\end{aligned}\end{equation*}

\begin{equation*}\begin{aligned}
 x''_2(q)&= \frac{1}{9}(36 x_2(q) + 3)= \frac{1}{3} + 4x_2(q) = \\
&= \frac{1}{3} + 8 q^{1/2} + 8q  + 32 q^{2/3} + \mathcal{O}(q^2)\\
\end{aligned}\end{equation*}

\begin{equation*}\begin{aligned}
 x''_3(q)&=\frac{1}{9}(36x_3(q) + 3)=   \frac{1}{3} + 4x_3(q) =\\ 
 &=\frac{1}{3} - 8 q^{1/2} + 8q  - 32 q^{2/3} + \mathcal{O}(q^2)\\
\end{aligned}\end{equation*}

Observe now that 
\begin{equation}\label{eq:x2}\begin{cases}
   x''_2(q) - x''_1(q)= 1 + 8\alpha q^{1/2}\\
   x''_3(q)-x''_1(q)= 1 - 8\beta q^{1/2}\\
   x''_2(q) - x''_3(q)= 64\gamma q^{1/2}\\
  \end{cases}
\end{equation}

where $\alpha, \beta\gamma\in \mathbb{Z}_p$ are $p$-adic integers congruent to $1\pmod{p}$.

\section{Construction of the elliptic curve}

In this section we will show the existence of infinitely many elliptic curves satisfying that their $p$-division fields are locally cyclic extensions of $\mathbb{Q}$. We will proceed in three steps. First, we will write out explicitly a condition modulo a suitable power of $p$, ensuring that the decomposition group at $p$ is cyclic. Then we will address the prime $p=2$; since we have chosen a Weierstrass equation whose discriminant is even, it is necessary to ensure, by means of explicit congruence conditions, that at this prime the elliptic curve has good reduction. Finally, we apply a result of Green and Tao on the representability of primes by linear forms to prove the existence of the desired elliptic curves.

\subsection{Conditions on the Weierstrass equation modulo a power of $p$}\label{sec:Conditions_at_p}

From now on, we fix a prime $p$.
Choose $q=p^{2p}\in \mathbb{Q}_p$; 
one then has $q^{1/2}\in \mathbb{Z}_p$, whence the curve $E_q$ satisfies that $x_1(q), x_2(q), x_3(q)\in \mathbb{Z}_p$ (with the notation of Section \ref{subsec:2-torsion}). Furthermore $q^{1/p}\in \mathbb{Z}_p$, whence the image by $\rho_{E_q, p}$ of the decomposition group at $p$ is a cyclic group by Corollary \ref{cor:cyclic_at_p}. 
Let $a_4(p^{2p})$, $a_6(p^{2p})\in \mathbb{Q}_p$ be given by Equations \eqref{eq:Tate}, let $E''/\mathbb{Q}_p$ be the elliptic curve defined by equation \eqref{eq:E2}, and let $x''_1, x''_2, x''_3\in \mathbb{Z}_p$ be the $x$-coordinates of the non-trival $2$-torsion points of $E''$.

Note that the $p$-adic expansions of $x''_1, x''_2, x''_3$ are the following:

\begin{equation}\label{eq:desarrollos}
 \begin{cases}
   x''_1=-2/3 - 16 p^{2p} + \mathcal{O}(p^{4p})\\
  x''_2=\frac{1}{3} + 8p^{p} + \mathcal{O}(p^{2p})\\
  x''_3= \frac{1}{3} -  8 p^{p} + \mathcal{O}(p^{2p})\\
 \end{cases}
\end{equation}

We want to determine an exponent $r\in \mathbb{N}$ such that $r>p$ and such that, if $a, b, c\in \mathbb{Z}$ satisfy that  $v_p(x''_1 - a)\geq r$, $v_p(x''_2 - b)\geq r$, $v_p(x''_3 - c)\geq r$ then the elliptic curve $E$ defined by the Weierstrass equation $y^2=(x-a)(x-b)(x-c)$ satisfies that the image by the Galois representation $\rho_{E, p}$ of the decomposition group at $p$ is cyclic. We will use the following result on the lifting of roots in $\mathbb{Z}_p$ (cf.~\cite[Theorem 2.24]{NZM}):

\begin{thm}
 Let $f(x)\in \mathbb{Z}[x]$ be a polynomial, $\alpha\in \mathbb{Z}$. Let $\tau$ be the greatest exponent such that $p^{\tau}\vert f'(\alpha)$. Suppose 
that $f(\alpha) \equiv 0\pmod{p^j}$ with  $j \geq 2 \tau + 1$. Then there is a unique $\beta\in \mathbb{Z}_p$ such that $f(\beta)=0$ and $\beta\equiv \alpha\pmod{p^{j-\tau}}$.
\end{thm}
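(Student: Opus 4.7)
The plan is to prove this by Newton iteration, carefully tracking $p$-adic valuations at each step. Setting $\alpha_0 := \alpha$, I would define inductively $\alpha_{n+1} := \alpha_n - f(\alpha_n)/f'(\alpha_n)$, viewed a priori in $\mathbb{Q}_p$. The hypothesis $j \geq 2\tau + 1$ is precisely what is needed to push the iterates into $\mathbb{Z}_p$ and drive quadratic convergence to a root.

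The heart of the argument is to verify inductively on $n$ the three invariants: (i) $\alpha_n \in \mathbb{Z}_p$ with $\alpha_n \equiv \alpha \pmod{p^{j-\tau}}$; (ii) $v_p(f'(\alpha_n)) = \tau$; (iii) $v_p(f(\alpha_n)) \geq j + n$. All three hold at $n=0$ by hypothesis. For the inductive step, the key tool is the integer-coefficient Taylor expansion
\[
f(\alpha_n + h) = f(\alpha_n) + h f'(\alpha_n) + h^2 R(\alpha_n, h), \qquad R \in \mathbb{Z}[x,y],
\]
applied with $h = -f(\alpha_n)/f'(\alpha_n)$, so that $v_p(h) \geq (j+n)-\tau$ by (ii) and (iii). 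The constant and linear terms cancel by construction of $h$, leaving $v_p(f(\alpha_{n+1})) \geq 2(j+n-\tau) \geq j+n+1$, where the last inequality is exactly the condition $j+n \geq 2\tau+1$. The analogous Taylor expansion for $f'$ gives $f'(\alpha_{n+1}) - f'(\alpha_n) \in h \cdot \mathbb{Z}_p$, with $v_p(h) \geq j-\tau > \tau$, so (ii) propagates. Finally (i) follows from $v_p(\alpha_{n+1}-\alpha_n) = v_p(h) \geq j-\tau$.

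Existence and uniqueness then follow readily. The sequence $(\alpha_n)$ is Cauchy in $\mathbb{Z}_p$ because consecutive terms differ by valuation at least $j+n-\tau$; its limit $\beta \in \mathbb{Z}_p$ satisfies $\beta \equiv \alpha \pmod{p^{j-\tau}}$, and continuity forces $f(\beta) = 0$. For uniqueness, given another root $\beta' \equiv \alpha \pmod{p^{j-\tau}}$, expand $0 = f(\beta')$ around $\beta$ to obtain $(\beta'-\beta) f'(\beta) = -(\beta'-\beta)^2 S$ for some $S \in \mathbb{Z}_p$. Since (ii) gives $v_p(f'(\beta)) = \tau$ whereas $v_p(\beta'-\beta) \geq j-\tau > \tau$, the linear term strictly dominates and forces $\beta' = \beta$.

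The main obstacle I anticipate is the sharp bookkeeping needed in the inductive step: one must verify that $v_p(f(\alpha_n))$ grows at least linearly while the denominators $f'(\alpha_n)$ retain fixed valuation $\tau$. The condition $j \geq 2\tau+1$ is tight, and the strict inequality $j-\tau > \tau$ that underlies both the propagation of (ii) and the uniqueness proof would fail if this were weakened. Using the Taylor formula with remainder in $\mathbb{Z}[x,y]$ (rather than merely $\mathbb{Z}_p$-valued) is what keeps the valuation bookkeeping free of spurious denominators.
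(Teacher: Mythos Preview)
Your proof is correct and is the standard Newton-iteration proof of this refined form of Hensel's lemma. The paper does not actually prove this statement: it is quoted as a known result with a reference to \cite[Theorem 2.24]{NZM}, so there is no ``paper's own proof'' to compare against. Your argument would serve as a self-contained substitute for that citation; the valuation bookkeeping you outline is exactly right, and in particular the strict inequality $j-\tau>\tau$ (equivalent to the hypothesis $j\geq 2\tau+1$) is precisely what makes both the propagation of $v_p(f'(\alpha_n))=\tau$ and the uniqueness step go through.
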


Let $q_0\in \mathbb{Z}$ be such that $q_0\equiv p^{2p} \pmod{p^{4p+1}}$, and let $f(x)=x^{2p} - q_0\in \mathbb{Z}[x]$.
We have that $p$ satisfies the congruence $f(p)=p^{2p} - q_0 \equiv 0\pmod{p^{4p+1}}$, and $f'(p)=2p p^{2p-1}=2p^{2p}$. Therefore we can apply the result with $\tau=2p$ and $j=4p+1$, and we obtain that there exists some $\beta\in \mathbb{Z}_p$ such that $f(\beta)=0$ and $\beta\equiv p\pmod{p^{2p+1}}$. If we consider the elliptic curve $E_{q_0}/\mathbb{Q}_p$, we have that $q_0^{1/2}=\pm\beta^p\in \mathbb{Z}_p$ and $q_0^{1/p}=\zeta \beta^2\in \mathbb{Z}_p$ (for some $p$-th root of unity $\zeta$). In particular, the image by $\rho_{E_{q_0}, p}$ of the decomposition group at $p$ is a cyclic group by Corollary \ref{cor:cyclic_at_p}.

Our problem is reduced to proving that, if $a, b, c$ are $p$-adically close to $x''_1, x''_2$ and $x''_3$, then the resulting elliptic curve will be isomorphic to $E_{q_0}$ with $q_0$ $p$-adically close to $p^{2p}$.

Let $a, b, c\in \mathbb{Z}$ be such that  $v_p(x''_1 - a)$, $v_p(x''_2 - b)$, $v_p(x''_3 - c)\geq 4p+1$, and let $E$ be the elliptic curve defined by the Weierstrass equation $y^2=(x-a)(x-b)(x-c)$. The discriminant  of this Weierstrass equation is
\begin{equation*}
 \Delta_E=16(a-b)^2(b-c)^2(c-a)^2,
\end{equation*}
which has positive $p$-adic valuation because of \eqref{eq:desarrollos}. Moreover, the $c_4$-invariant of this equation is 
\begin{equation*}
 c_{4, E}=(a-b)^2 - (b-c)(a-c),
\end{equation*}
which is a $p$-adic unit. Therefore, the $j$-invariant has negative $p$-adic valuation. Lemma \ref{lem:isomorphic} implies that there exists some $q_0$ with $\vert q_0\vert<1$ such that $E$ is isomorphic to $E_{q_0}$ over an extension of $\mathbb{Q}_p$ of degree at most $2$, and Lemma \ref{lem:character} implies that if the image by $\rho_{E_{q_0}, p}$ of the decomposition group at $p$ is a cyclic group, then the image by $\rho_{E, p}$ is also cyclic. 

In fact, if $j(E)$ is the $j$-invariant of $E$, we can compute $q_0$ by evaluating a power series $g(z)=z + \cdots \in \mathbb{Z}[[z]]$ at $1/j(E)$. Denote by $\Delta_q$ and $c_{4, q}$ the discriminant and $c_4$-invariant of \eqref{eq:E2} for our choice of $q=p^{2p}$.

Since $v_p(x''_1 - a)$, $v_p(x''_2 - b)$, $v_p(x''_3 - c)\geq 4p+1$, it also holds that $\Delta_E \equiv \Delta_q\pmod{p^{4p+1}}$ and $c_{4, E} \equiv c_{4, q} \pmod{p^{4p+1}}$. Since $c_{4, E}$ and $c_{4, q}$ are invertible modulo $p^{4p+1}$, we also have that $c_{4, E}^{-1} \equiv c_{4, q}^{-1} \pmod{p^{4p+1}}$, and therefore 
$1/j_{E}\equiv 1/j_{E_q}\pmod{p^{4p+1}}$.

Therefore $q=g(1/j_{E_q})\equiv g(1/j_{E})=q_0\pmod{p^{4p+1}}$, which is precisely the condition that we needed in order to ensure that the image of $\rho_{E_{q_0},p }$ of the decomposition group at $p$ is cyclic.

\subsection{Conditions at the prime $2$}

Consider the elliptic curve $E_2/\mathbb{Q}$ defined by the Weierstrass equation
$$y^2=x(x-1)(x-17).$$

This elliptic curve has conductor 17 and $j$-invariant $20346417/289$ (labelled 17.a2 in the L-functions and modular forms database \cite{LMFDB}). In particular, it has good reduction at $2$. Now we consider a $2$-adic approximation of this equation. Namely,
let us consider the elliptic curve $E(t_1, t_2, t_3)$ defined by the Weierstrass equation
$$y^2=(x- 64t_1)(x- (1 + 64t_2))(x-(17 + 64t_3))$$
for any $t_1, t_2, t_3\in \mathbb{Z}$. 

\begin{lem}\label{lem:conditions_at_2}
 The elliptic curve $E(t_1, t_2, t_3)$ has good reduction at the prime $2$.
\end{lem}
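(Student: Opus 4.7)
The strategy is to propagate the good reduction of $E_2 = E(0,0,0)$ (which is the elliptic curve 17.a2 of conductor $17$) to the whole family by means of a single admissible change of variables over $\mathbb{Z}_2$. I would carry this out in three steps.

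First, exhibit an explicit minimal Weierstrass model for $E_2$ at $2$: a direct calculation shows that the $[u,r,s,t]$-transformation $x = 4X+1$, $y = 8Y + 4X$ sends $y^2 = x(x-1)(x-17)$ to
$$ Y^2 + XY = X^3 - 4 X^2 - X, $$
whose discriminant is $17^2$, a $2$-adic unit. This confirms that $E_2$ has good reduction at $2$.

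Second, apply the same substitution to the family $E(t_1,t_2,t_3)$. Writing it in short Weierstrass form $y^2 = x^3 + A_2 x^2 + A_4 x + A_6$, one has
\begin{align*}
A_2 &= -18 - 64(t_1+t_2+t_3), \\
A_4 &= 17 + 64(18 t_1 + 17 t_2 + t_3) + 64^2(t_1 t_2 + t_1 t_3 + t_2 t_3), \\
A_6 &= -64 \cdot 17 \, t_1 - 64^2 t_1(t_3 + 17 t_2) - 64^3 t_1 t_2 t_3,
\end{align*}
so in particular $A_2 \equiv -18$, $A_4 \equiv 17$, $A_6 \equiv 0 \pmod{64}$. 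The substitution produces a new equation $Y^2 + XY = X^3 + a_2 X^2 + a_4 X + a_6$ with
$$ a_2 = \frac{2 + A_2}{4}, \quad a_4 = \frac{3 + 2 A_2 + A_4}{16}, \quad a_6 = \frac{1 + A_2 + A_4 + A_6}{64}. $$
A routine check of $2$-adic valuations (using the expressions above for the $A_i$) shows $a_2, a_4, a_6 \in \mathbb{Z}_2$, with $a_2 \equiv -4 \pmod{16}$, $a_4 \equiv -1 \pmod{4}$, and $a_6 \equiv 0 \pmod{16}$.

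Third, compute the discriminant $\Delta$ of the new equation via the standard $b$-invariant formula. With $a_1 = 1, a_3 = 0$ we have $b_2 = 1 + 4a_2$, $b_4 = 2a_4$, $b_6 = 4 a_6$ and $b_8 = (1+4a_2)a_6 - a_4^2$, so the above congruences give $b_2 \equiv 1$, $b_4 \equiv b_6 \equiv 0$ and $b_8 \equiv -a_4^2 \equiv -1 \pmod 2$. Hence
$$ \Delta = -b_2^2 b_8 - 8 b_4^3 - 27 b_6^2 + 9 b_2 b_4 b_6 \equiv 1 \pmod 2 $$
is a $2$-adic unit, so the new Weierstrass equation is a minimal model and $E(t_1,t_2,t_3)$ has good reduction at $2$. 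The main obstacle is purely computational: one must guess the correct minimising substitution in Step~1 and track $2$-adic precision in Step~2. Conceptually the argument works because the perturbation of $E_2$ is of order $2^6$, while the change of variables costs only $u^2 = 4$ in precision, leaving ample room to preserve unit-discriminantness.
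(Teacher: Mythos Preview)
Your proof is correct and follows essentially the same approach as the paper: both apply the change of variables $x=4X+1$, $y=8Y+4X$ to the family and verify that the resulting Weierstrass equation has odd discriminant. The paper simply writes out the transformed equation in terms of $t_1,t_2,t_3$ and asserts that one can check the discriminant is odd, whereas you supply the $2$-adic bookkeeping (via the congruences on $a_2,a_4,a_6$ and the $b$-invariants) that makes this verification explicit.
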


\begin{proof}
 Consider the change of variables
 \begin{equation*}
  \begin{cases}
   x=4x' +1\\
   y=8y' + 4x'
  \end{cases}
 \end{equation*}

The new Weierstrass equation that we obtain is
\begin{multline*}
 xy + y^2 = x^3 - (16t_1 + 16t_2 + 16t_3 + 4)x^2 - \\ (-256t_1t_2 - 256t_1t_3 - 256t_2t_3 - 64t_1 - 60t_2 + 4t_3 + 1)x - 4096t_1t_2t_3 - 1024t_1t_2 + 64t_2t_3 + 16t_2
\end{multline*}

One can check that the discriminant of this elliptic curve is odd, for any value of $t_1, t_2, t_3\in \mathbb{Z}$. Therefore, the elliptic curve $E(t_1, t_2, t_3)/\mathbb{Q}$ has good reduction at $2$ for any tuple $(t_1, t_2, t_3)\in \mathbb{Z}^3$.

\end{proof}

\subsection{Choosing the elliptic curve}\label{sec:Choosing}

Let $K$ be a number field, and let $E$ be the plane projective curve defined by the (affine) Weierstrass equation

\begin{equation}\label{eq:ce}
 y^2=(x-a)(x-b)(x-c),
\end{equation} for $a, b, c\in K$.

The discriminant of this equation is $\Delta= 16 (a-b)^2(b - c)^2 (c-a)^2$. Thus, if $a, b, c\in K$ are three different elements, the curve $E$ is an elliptic curve defined over the field $K$. Furthermore, if $a, b, c$ are algebraic integers, it has good reduction outside the set of primes dividing $2(a-b)(b-c)(c-a)$.

Our aim is to find  $a_0, b_0, c_0\in \mathbb{Z}$ such that the following congruences hold:
\begin{equation}\label{eq:conditions}
 \begin{cases}
 a_0\equiv 0\pmod{64}\\
  a_0\equiv x''_1 \pmod{p^{4p+1}}\\
 \end{cases}
\begin{cases}
 b_0\equiv 1\pmod{64}\\
  b_0\equiv x''_2 \pmod{p^{4p+1}}\\
 \end{cases}
 \begin{cases}
 c_0\equiv 17\pmod{64}\\
  c_0\equiv x''_3 \pmod{p^{4p+1}}\\
 \end{cases}
\end{equation}
Moreover, we want  that $b_0-a_0$, $c_0-a_0$ are prime numbers, congruent to $1$ mod $p$, and $c_0-b_0$ is equal to $16p^p$ times a prime number which is congruent to $1$ mod $p$.
Let us show how to find such values of $a_0, b_0, c_0\in \mathbb{Z}$. To simplify the formulas, call $r=4p+1$.

First of all, using the Chinese Remainder Theorem we choose auxiliary values $s_1, s_2\in \mathbb{Z}$ such that:

\begin{equation*}
 \begin{cases}
  s_1\equiv 1 \pmod{64}\\
  s_1\equiv x''_2-x''_1\pmod{p^{r}}
 \end{cases}
 \begin{cases}
  s_2\equiv 17 \pmod{64}\\
  s_2\equiv x''_3-x''_1\pmod{p^{r}}
 \end{cases}
\end{equation*}

Consider the following affine linear forms:

\begin{equation*}
 \begin{cases}
  Q_1(X, Y)= s_1 + 64p^rX\\
  Q_2(X, Y)= s_2 + 64 p^rY\\
  Q_3(X, Y)=\frac{s_1-s_2}{16p^p} + 4p^{r-p}(X-Y) \\
 \end{cases}
\end{equation*}

Note that, because of the choice of $s_1$ and $s_2$, the quantity $(s_1-s_2)/16p^p$ is an integer number.

 These forms are pairwise affine-linearly independent, thus we can apply a result of Green and Tao (\cite[Corollary 1.9]{GreenTao}\footnote{Corollary 1.9, as stated in \cite{GreenTao}, is conditional on two conjectures, labeled MS(s), GI(s). However, MN(s) has been proven by Green and Tao in 2008 ["The Möbius function is strongly orthogonal to nilsequences'', preprint, arXiv:0807.1736], and conjecture GI(s) was recently proved by Green, Tao and Ziegler ["An inverse theorem for the Gowers Us+1[N]-norm'', preprint, arXiv:1009.3998]; thus the result is unconditional.}; cf.~\cite[Theorem 3.5]{KimKoenig}).
 
 Hence, we can conclude that there exist infinitely many $x_0$, $y_0$, such that $Q_1(x_0, y_0)$, $Q_2(x_0, y_0)$, $Q_3(x_0, y_0)$ are prime numbers.
 
Let us pick such a pair $(x_0, y_0)$, and call $q_1=Q_1(x_0, y_0)$, $q_2=Q_2(x_0, y_0)$, $q_3=Q_3(x_0, y_0)$ the corresponding primes. 

Choose an integer $a_0\equiv x''_1\pmod{p^r}$ and congruent to $0$ mod 64, and let $b_0=a_0 + q_1$, $c_0= a_0 + q_2$.
Then it holds that 
$$b_0-c_0=q_1-q_2=s_1-s_2 + 64p^r(x_0-y_0)= 16p^p q_3$$ and thus

\begin{equation*}
\begin{cases} b_0-a_0=q_1, \\
 c_0-a_0= q_2\\
 b_0-c_0=16p^pq_3\end{cases}
\end{equation*}

Moreover, since $q_1=b_0-a_0$ is congruent to $x''_2-x''_1\pmod{p^r}$, it holds that $q_1$ is congruent to $1 \pmod{p}$. Similarly, $q_2=c_0-a_0\equiv x''_3-x''_1\pmod{p^r}$ is congruent to $1\pmod{p}$. Finally, 
$q_3=\frac{b_0-c_0}{16p^{p}}$. But
$b_0-c_0\equiv x''_2-x''_3\pmod{p^r}$, and $x''_2-x''_3= 16p^{p} + \mathcal{O}(p^{2p})$. Since $r>p$, it follows that $q_3\equiv 1 \pmod p$.

\begin{cor}
\label{cor:exist}
Assume $p\geq 5$.
 The curve $E/\mathbb{Q}$ defined by the Weierstrass equation \begin{equation}\label{eq:cefinal} y^2=(x-a_0)(x-b_0)(x-c_0)\end{equation}
 is an elliptic curve with good reduction outside of $p, q_1, q_2, q_3$, and multiplicative reduction at $p$, $q_1$, $q_2$, $q_3$.
 
 Denote by $\rho_{E, p}$ the Galois representation attached to the $p$-torsion points of $E$. Then the image by $\rho_{E, p}$ is $\GL_2(p)$, and the images of the decomposition groups at the primes $p, q_1, q_2, q_3$ are cyclic.
\end{cor}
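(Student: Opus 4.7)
The plan is to verify the four assertions---good reduction outside $\{p,q_1,q_2,q_3\}$, multiplicative reduction at each of these primes, cyclic decomposition groups, and surjectivity of $\rho_{E,p}$---by local analysis at each relevant prime, then to deduce surjectivity from the presence of a transvection in the global image.

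The discriminant of (\ref{eq:cefinal}) factors as
$$\Delta_E = 16(a_0-b_0)^2(b_0-c_0)^2(c_0-a_0)^2 = 2^{12}p^{2p}q_1^2q_2^2q_3^2,$$
so bad reduction can occur only at $2,p,q_1,q_2,q_3$. The congruences $a_0\equiv 0$, $b_0\equiv 1$, $c_0\equiv 17\pmod{64}$ from (\ref{eq:conditions}) place $E$ in the family $E(t_1,t_2,t_3)$ of Lemma \ref{lem:conditions_at_2}, giving good reduction at $2$. At each $\ell\in\{p,q_1,q_2,q_3\}$ a direct check shows that the $c_4$-invariant of $E$ is an $\ell$-adic unit: at $p$ this is the computation already prepared in Section \ref{sec:Conditions_at_p} (using $x''_1\equiv -2/3$, $x''_2\equiv x''_3\equiv 1/3\pmod p$), while at each $q_i$ only one of the three factors $a_0-b_0$, $b_0-c_0$, $c_0-a_0$ is divisible by $q_i$, the other two being coprime to $q_i$. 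Hence $E$ has multiplicative reduction at each of $p,q_1,q_2,q_3$.

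For cyclicity of the decomposition groups, at $p$ the argument of Section \ref{sec:Conditions_at_p} combined with Lemma \ref{lem:isomorphic} produces a Tate parameter $q_0\in\mathbb{Q}_p$ satisfying $q_0\equiv p^{2p}\pmod{p^{4p+1}}$ such that $E$ becomes isomorphic to $E_{q_0}$ over a (possibly quadratic) extension of $\mathbb{Q}_p$; since $q_0^{1/p}\in\mathbb{Q}_p$, Corollary \ref{cor:cyclic_at_p} forces $\Image(\rho_{E_{q_0},p})$ to be cyclic, and Lemma \ref{lem:character} transfers cyclicity to $\rho_{E,p}|_{G_{\mathbb{Q}_p}}$. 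At each $q_i$, multiplicative reduction makes $E$ a Tate curve over $\mathbb{Q}_{q_i}$ (up to quadratic twist), and the congruence $q_i\equiv 1\pmod p$ lets us apply Corollary \ref{cor:1modp} with residue characteristic $q_i$ and torsion prime $p$: since $\chi_p(\Frob_{q_i})\equiv q_i\equiv 1\pmod p$, the whole decomposition group maps into a cyclic group of order dividing $p$.

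The remaining step---surjectivity of $\rho_{E,p}$ onto $\GL_2(\mathbb{F}_p)$---is the crux and I expect it to be the main obstacle. Since $\det\rho_{E,p}=\chi_p$ is surjective onto $\mathbb{F}_p^\times$, it suffices to show $\Image(\rho_{E,p})\supseteq\SL_2(\mathbb{F}_p)$. At $q_1$ one has $v_{q_1}(\Delta_E)=2$, coprime to $p\geq 5$, so the Tate uniformizer satisfies $q_{\mathrm{Tate}}^{1/p}\notin\mathbb{Q}_{q_1}$; combined with $q_1\equiv 1\pmod p$ this makes the image of $G_{\mathbb{Q}_{q_1}}$ a non-trivial unipotent subgroup of $\GL_2(\mathbb{F}_p)$, and hence contains a transvection of order $p$. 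By Dickson's classification, any subgroup of $\GL_2(\mathbb{F}_p)$ that contains a transvection and surjects under the determinant is either contained in a Borel or contains $\SL_2(\mathbb{F}_p)$. A Borel image would force $E$ to have a rational $p$-isogeny, which Mazur's theorem on rational isogenies of prime degree for semistable elliptic curves excludes for $p\geq 11$. The delicate cases $p\in\{5,7\}$ would require a separate argument; here one can exploit the fact that $E$ has full rational $2$-torsion (so a rational $p$-isogeny produces a rational non-cuspidal, non-CM point on the fibered product $X(2)\times_{X(1)}X_0(p)$) together with the specific reduction type---multiplicative at four primes, three of which are $\equiv 1\pmod p$---to derive a contradiction with the known list of rational points on these modular curves.
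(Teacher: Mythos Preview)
Your treatment of the discriminant, the reduction types, and the cyclicity of the decomposition groups is essentially identical to the paper's proof, so those parts are fine.

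The difference lies in the surjectivity argument, and here your route is both longer and incomplete. You produce a transvection via Tate theory at $q_1$, invoke Dickson to reduce to excluding a Borel image, and then appeal to a Mazur-type isogeny statement for semistable curves that you claim settles $p\ge 11$, leaving $p\in\{5,7\}$ to an unspecified analysis of rational points on $X(2)\times_{X(1)}X_0(p)$. That last step is not carried out, so as written the proof has a genuine gap for $p=5,7$; and even for larger $p$, the semistable-isogeny statement you cite is not the standard form of Mazur's theorem and would itself need justification.

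The paper avoids all of this with a single uniform argument. It invokes Serre's result for semistable curves (cited as \cite{Serre1994-1995}): if $E/\mathbb{Q}$ is semistable and $\rho_{E,p}$ is not surjective, then $E$ or an elliptic curve $p$-isogenous to $E$ over $\mathbb{Q}$ has a rational point of order $p$ (not merely a rational $p$-isogeny). Since a $p$-isogeny with $p$ odd preserves the full rational $2$-torsion, either curve then has $\mathbb{Z}/2p\mathbb{Z}\times\mathbb{Z}/2\mathbb{Z}$ inside its rational torsion, which Mazur's torsion theorem rules out for every $p\ge 5$. This closes the argument in one stroke, with no case distinction and no appeal to Dickson or to the explicit geometry of $X(2)\times_{X(1)}X_0(p)$. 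The key idea you were missing is that semistability upgrades ``reducible'' to ``has a rational $p$-torsion point on one side of the isogeny'', which is exactly what lets Mazur's \emph{torsion} theorem (rather than his isogeny theorem) finish the job.
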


\begin{proof} The discriminant of the Weierstrass equation \eqref{eq:cefinal} is 
 $\Delta= 16 (a_0-b_0)^2(b_0 - c_0)^2 (c_0-a_0)^2 = 16 q_1^2 q_2^2 (16p^pq_3)^2= 2^{12} p^{2p}q_1^2 q_2^2 q_3^2$.
 By construction, the three values $q_1$, $q_2$ and $16p^pq_3$ are nonzero, which ensures that \eqref{eq:cefinal} defines an elliptic curve, with good reduction outside the set of primes dividing the discriminant.
 
 Furthermore, at the primes $q_1, q_2, q_3$, the valuation of the discriminant is less than $12$, and hence the equation \eqref{eq:cefinal} is a minimal Weierstrass equation for $E$.
 To see that the reduction is multiplicative, we need to ensure that the invariant $c_4$ of the Weierstrass equation \eqref{eq:cefinal} is not divisible by these primes. An easy computation yields that 
 $$c_4=(a_0-b_0)^2 - (b_0-c_0)(a_0-c_0)=q_1^2 - 16p^pq_2q_3;$$
 it is clear that this number is not divisible by $q_1, q_2, q_3$.
 
Moreover, for each $i=1, 2, 3$, the $j$-invariant of $E$ has negative $q_i$-adic valuation. By Lemma \ref{lem:isomorphic}, $E$ is isomorphic to $E_{\beta_i}$ for a certain value $\beta_i\in \mathbb{Q}_{q_i}^{\times}$ with positive valuation.

 Since $q_i$ is congruent to $1$ mod $p$, Corollary \ref{cor:1modp} implies that the image of the decomposition group at $q_i$ by $\rho_{E_{\beta_i}, p}$ is cyclic, for $i=1, 2, 3$. Lemma \ref{lem:character} implies that the image by $\rho_{E, p}$ is also cyclic. 
 
 Let us look at the reduction mod $2$. Since $a_0$ is congruent to $0$ mod $64$, we can write it as $a_0=64 t_1$. Similarly $b_0=a_0 + q_1=a_0 + s_1 + 64p^rx_0= 1 + 64 t_2$ and $c_0=a_0 + q_2=a_0 + s_2 + 64 p^r y_0=17 + 64 t_3$ for certain $t_1, t_2, t_3\in \mathbb{Z}$. Therefore, Lemma \ref{lem:conditions_at_2} implies that $E/\mathbb{Q}$ has good reduction at $2$.
 
 Now we study the reduction mod $p$. We perform the following two changes of variables
 
 \begin{equation*}
  \begin{cases}
   x=x'/9\\
   y=y'/27\\
  \end{cases} \hskip 2cm 
  \begin{cases} x'=36x'' + 3\\ y'= 108(2y'' + x'')\\
  \end{cases}
 \end{equation*}
The Weierstrass equation defining $E$ is transformed into a new equation
\begin{multline*}
y^2 + xy= x^3 - \frac{1}{4}(a_0 + b_0 + c_0)x^2 - (-\frac{1}{16}(a_0b_0 + a_0c_0 + b_0c_0) + 1/24(a_0 +b_0 + c_0) - \frac{1}{48})x \\- \frac{1}{64}a_0b_0c_0 + \frac{1}{192}(a_0b_0 + a_0c_0 + b_0c_0) - \frac{1}{576}(a_0 + b_0 + c_0) + \frac{1}{1728}.
\end{multline*}

If we compute the discriminant $\Delta$ of this equation, taking into account that $a_0\equiv x''_1\equiv -2/3 + \mathcal{O}(p^{p+1})\pmod{p^r}$, $b_0\equiv x''_2= 1/3 + 8p^{p} + \mathcal{O}(p^{p+1})\pmod{p^r}$ and $c_0\equiv x''_3= 1/3 - 8p^{p} + \mathcal{O}(p^{p+1})\pmod{p^r}$, we obtain that $p^{p}\vert \Delta$.  However, the invariant $c_4$ is congruent to $1$ mod $p^{p}$. Hence, this equation is a minimal model at $p$, and the curve has multiplicative reduction at $p$. 

Because of the choice of $r$, we have that the image by $\rho_{E, p}$ of the decomposition group at $p$ is cyclic (cf.~Section \ref{sec:Conditions_at_p}). 
 
Finally, note that $E/\mathbb{Q}$ is a semistable elliptic curve. 
If $\rho_{E, p}$ is not surjective, then either $E$ or an elliptic curve $p$-isogenous to $E$, should have a rational point of order $p$ (cf.~\cite{Serre1994-1995}).

 But note that $E$, and with it any curve $p$-isogenous to $E$ over $\mathbb{Q}$, has full $2$-torsion group $\mathbb{Z}/2\mathbb{Z}\times \mathbb{Z}/2\mathbb{Z}$. This yields an elliptic curve over $\mathbb{Q}$ with torsion subgroup containing $\mathbb{Z}/2p\mathbb{Z}\times \mathbb{Z}/2\mathbb{Z}$. By Mazur's torsion theorem (cf.~\cite[Theorem 2]{Mazur78}), and since $p\ge 5$, this is impossible.

\end{proof}

Since the $p$-division field of any elliptic curve $E/\mathbb{Q}$ is unramified (and hence, has cyclic decomposition group) at any prime of good reduction other than $p$, we have obtained 
Theorem \ref{thm:1}.



\begin{rem} \label{rem:linear_disjointness_over_Qzeta_p}
Assume that $L_1/\mathbb{Q}$ and $L_2/\mathbb{Q}$ are two Galois extensions with Galois group $\GL_2(p)$, obtained as in the proof of Corollary \ref{cor:exist}, such that the prime $q$ has multiplicative reduction for the elliptic curve affording $L_1/\mathbb{Q}$ and is a prime of good reduction for the elliptic curve affording $L_2/\mathbb{Q}$. Then $L_1$ and $L_2$ are linearly disjoint over $\mathbb{Q}(\zeta_p).$

Indeed, the Galois group of $L_1/\mathbb{Q}(\zeta_p)$ and $L_2/\mathbb{Q}(\zeta_p)$ is $\SL_2(p)$. Therefore  $\mathrm{Gal}(L_1/L_1\cap L_2)$ is a normal subgroup of $\SL_2(p)$, hence it is either trivial or a cyclic group of order two, or it is the whole of $\SL_2(p)$. But $L_1\not\subset L_2$, since $q$ is ramified in $L_1/\mathbb{Q}$ and is unramified in $L_2/\mathbb{Q}$. The second possibility cannot hold, because the inertia group at $q$ of $L_1/\mathbb{Q}(\zeta_p)$ is a cyclic group of order $p>2$, which cannot be contained in the extension $L_1/L_1\cap L_2$. 
Therefore we conclude that $L_1\cap L_2=\mathbb{Q}(\zeta_p)$.
\end{rem}

\subsection{Changing solutions via twisting}\label{sec:twisting}
We conclude with a strengthening of the conclusion of Remark \ref{rem:linear_disjointness_over_Qzeta_p}. Concretely, we show, applying a ``twisting" argument, that the locally cyclic extensions obtained above can be changed to be linearly disjoint even over a quadratic number field.  For this, we require a lemma. 

\begin{lem}
\label{lem:twist}
Let $G$ be a finite group, $N\le Z(G)$ a cyclic central subgroup of $G$ and $H:=G/N$. Let $K/\mathbb{Q}$ be a locally cyclic Galois extension with group $H$, and assume that $K$ embeds into at least one locally cyclic Galois extension with group $G$. Then it embeds into infinitely many such extensions, which may additionally be assumed linearly disjoint over $K$.
\end{lem}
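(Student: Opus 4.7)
The plan is to construct the extensions as twists of the given solution $L_0$. Let $\rho_0 \colon G_{\mathbb{Q}} \to G$ be the surjection corresponding to $L_0$, so that its composition $\bar{\rho}_0 \colon G_{\mathbb{Q}} \to H$ cuts out $K$. For any continuous homomorphism $\chi \colon G_{\mathbb{Q}} \to N$, the pointwise product $\rho_\chi := \rho_0 \cdot \chi$ is again a homomorphism into $G$ (since $N$ is central in $G$) whose reduction modulo $N$ is still $\bar{\rho}_0$. Thus, whenever $\rho_\chi$ is surjective, its fixed field $L_\chi$ is a Galois $G$-extension of $\mathbb{Q}$ containing $K$, and different suitably chosen characters $\chi$ will yield different solutions.

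I will take each $\chi$ to be the cyclotomic character attached to an auxiliary prime $\ell$. Let $S$ denote the finite set of rational primes ramified in $L_0/\mathbb{Q}$. By Chebotarev's density theorem, there exist infinitely many primes $\ell$ satisfying simultaneously: $\ell$ is coprime to $|G| \cdot \mathrm{disc}(K)$; $\ell \equiv 1 \pmod{|N|}$; $\ell$ splits completely in $L_0$; and every $p \in S$ is an $|N|$-th power residue modulo $\ell$ (these conditions altogether amount to $\ell$ splitting completely in a fixed finite Galois extension of $\mathbb{Q}$, namely the compositum of $L_0$, $\mathbb{Q}(\zeta_{|N|})$, and $\mathbb{Q}(\zeta_{|N|}, p^{1/|N|})$ for $p\in S$). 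For each such $\ell$, let $F_\ell \subset \mathbb{Q}(\zeta_\ell)$ be the unique subfield of degree $|N|$ and $\chi_\ell \colon G_{\mathbb{Q}} \twoheadrightarrow \Gal(F_\ell/\mathbb{Q}) \cong N$ the corresponding character; by design, $\chi_\ell$ is unramified outside $\ell$, totally ramified at $\ell$ with $\chi_\ell(I_\ell)=N$, and trivial on the decomposition group $D_p$ for every $p \in S$. It then follows that $\rho_{\chi_\ell}(I_\ell) = \chi_\ell(I_\ell) = N$ (using $\rho_0(I_\ell) = 1$), forcing the image of $\rho_{\chi_\ell}$ to contain $N$ and to surject onto $H$, hence to equal $G$. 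Moreover $L_{\chi_\ell}$ is locally cyclic: at $p \in S$ the decomposition group $\rho_{\chi_\ell}(D_p)$ equals $\rho_0(D_p)$, cyclic by hypothesis; at $\ell$ it equals $\chi_\ell(D_\ell) \subseteq N$, cyclic; and at all other primes both $\rho_0$ and $\chi_\ell$ are unramified, so the decomposition group is topologically cyclic.

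For any distinct primes $\ell_1, \dots, \ell_n$ of the above type, the extensions $L_{\chi_{\ell_i}}$ are then automatically pairwise linearly disjoint over $K$. Indeed, since $F_{\ell_i}/\mathbb{Q}$ is totally ramified at $\ell_i$ and $K$ is unramified there, $KF_{\ell_i}/K$ is totally ramified above $\ell_i$ and hence linearly disjoint over $K$ from every extension of $K$ unramified above $\ell_i$; iterating this yields that $L_0, KF_{\ell_1}, \dots, KF_{\ell_n}$ are jointly linearly disjoint over $K$ with compositum of Galois group $N^{n+1}$ over $K$. Each $L_{\chi_{\ell_i}}/K$ is the cyclic $N$-subextension of this compositum corresponding to the character $\phi_{\ell_i} := \rho_0|_{G_K} + \chi_{\ell_i}|_{G_K}$, and the sum-projection $N^{n+1} \to N^2$, $(a, b_1, \dots, b_n) \mapsto (a + b_i, a + b_j)$, is surjective for any $i \ne j$ (its matrix has full rank over $\mathbb{Z}/|N|$), so the pair $(\phi_{\ell_i}, \phi_{\ell_j}) \colon G_K \to N^2$ is surjective, which translates to $L_{\chi_{\ell_i}} \cap L_{\chi_{\ell_j}} = K$. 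The hard part of the argument is really the Chebotarev step in the second paragraph: one must verify that the conjunction of all the imposed local conditions on $\ell$ is consistent and indeed cuts out a set of primes of positive density, which as explained reduces to standard Chebotarev applied in an explicit finite Galois extension of $\mathbb{Q}$ built from $L_0$ and a few cyclotomic/radical extensions.
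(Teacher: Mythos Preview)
Your proof is correct and follows the same overall architecture as the paper's: twist the given solution $\rho_0$ by an $N$-valued character $\chi$ which is ramified only at primes completely split in $L_0$ and which is trivial on $D_p$ for every $p\in S$. The difference lies in how this $\chi$ is produced. The paper manufactures it indirectly: it first takes characters $\varphi$ coming from primes split in $L_0(\zeta_{|N|})$, then uses a pigeonhole argument (there are only finitely many possible tuples $(\varphi|_{D_p})_{p\in S}$, but infinitely many $\varphi$) to find $\varphi_1,\varphi_2$ agreeing on every $D_p$, and finally sets $\chi=\varphi_1\varphi_2^{-1}$. You instead go straight for a single prime $\ell$: the extra condition ``each $p\in S$ is an $|N|$-th power modulo $\ell$'' is exactly what forces $\chi_\ell|_{D_p}=1$, and you correctly observe that all of your constraints on $\ell$ amount to complete splitting in the Galois extension $L_0\cdot\mathbb{Q}(\zeta_{|N|})\cdot\prod_{p\in S}\mathbb{Q}(\zeta_{|N|},p^{1/|N|})$, so Chebotarev gives infinitely many such $\ell$. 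Your route is more explicit and avoids the pigeonhole step; the paper's route avoids invoking the Kummer/reciprocity identification. Your linear-disjointness argument via the surjectivity of $(a,b_1,\dots,b_n)\mapsto(a+b_i,a+b_j)$ is also a clean alternative to the paper's inductive ``choose the next ramified prime larger than all previous ones'' device.
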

\begin{proof}
Pick a locally cyclic $G$-extension $L/\mathbb{Q}$ containing $K$, and let $\mathcal{S}$ be the set of (finite) primes of $\mathbb{Q}$ ramifying in $L/\mathbb{Q}$. Consider the set of all epimorphisms $\varphi: G_{\mathbb{Q}}\to N$ arising from Galois extensions $F/\mathbb{Q}$ with Galois group $N$, such that all finite primes ramifying in $F/\mathbb{Q}$ are completely split in $L$ and {\it totally} tamely ramified in $F/\mathbb{Q}$. There are infinitely many such $\varphi$ (for example, for any of the infinitely many primes $q$ which split completely in $L(\zeta_{|N|})$, the cyclotomic extension $\mathbb{Q}(\zeta_q)/\mathbb{Q}$ has a cyclic subextension of group $N$, ramified only at $q$). Furthermore, since there are only finitely many possibilities for the restriction $\varphi_{|D_q}$ to the decomposition group at $q$ (for each fixed prime $q$), there are also infinitely many ways to choose two such epimorphisms, say $\varphi_1$ and $\varphi_2$, for which the restrictions $(\varphi_1)_{|D_q}$ and $(\varphi_2)_{|D_q}$ coincide for all $q\in \mathcal{S}$. Up to choosing the sets of ramified primes of $\varphi_1$ and $\varphi_2$ distinct, we also obtain that the fixed fields of the kernels of $\varphi_1$ and of $\varphi_2$ are linearly disjoint over $\mathbb{Q}$. Let $\tilde{F}$ be the fixed field of the kernel of the product map $G_{\mathbb{Q}}\to N$, $x\mapsto \varphi_1(x)\cdot \varphi_2^{-1}(x)$. 
Then $\tilde{F}/\mathbb{Q}$ is again Galois of group $N$, all its ramified primes are completely split in $L$, and by construction, all $q\in \mathcal{S}$ are completely split in $\tilde{F}$.
We may now twist $L/\mathbb{Q}$ by $\tilde{F}/\mathbb{Q}$ in the following way: Up to choosing the ramified primes of $\tilde{F}$ sufficiently large, we may assume that $L/\mathbb{Q}$ and $\tilde{F}/\mathbb{Q}$ are linearly disjoint, and hence $\textrm{Gal}(L\tilde{F}/\mathbb{Q}) \cong G\times N$. Let $\tilde{L}$ be the fixed field of the ``diagonal subgroup $D:=\{(n,n)\mid n\ \in N\}$ in this direct product. Then $\tilde{L}/\mathbb{Q}$ is again a $G$-extension, and its finite ramified primes are at most the primes in $\mathcal{S}$ together with the ramified primes of $\tilde{F}/\mathbb{Q}$. Since the latter are totally split in $L$, their decomposition group in $\tilde{L}/\mathbb{Q}$ is the same as in $\tilde{F}/\mathbb{Q}$, i.e., cyclic and embedding into $N$. Since on the other hand, the former primes are completely split in $\tilde{F}/\mathbb{Q}$, their decomposition group is the same as in $L/\mathbb{Q}$, i.e., cyclic by assumption. Linear disjointness over $K$ of any two of the extensions $\tilde{L}/\mathbb{Q}$ obtained in this way is easily achieved via noting that, in the construction of the fields $\tilde{F}$, one may choose the smallest ramified prime arbitarily large (e.g., larger than any prime ramified in any one of finitely many extensions already obtained when repeating the process) and totally ramified.
\end{proof}

We can now deduce the following strengthening of Theorem \ref{thm:1}. 
\begin{cor}
\label{thm:disj}
For any prime $p\ge 5$, there are infinitely many locally cyclic Galois extensions of $\mathbb{Q}$ with Galois group $GL_2(p)$ and which are pairwise linearly disjoint over the field $\mathbb{Q}(\sqrt{p^\star}) := \mathbb{Q}(\sqrt{(-1)^{(p-1)/2}p})$.
\end{cor}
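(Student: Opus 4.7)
The plan is to combine Remark \ref{rem:linear_disjointness_over_Qzeta_p} with the twisting construction of Lemma \ref{lem:twist}. The obstacle to immediate disjointness over $\mathbb{Q}(\sqrt{p^\star})$ is that every $p$-division field of an elliptic curve automatically contains $\mathbb{Q}(\zeta_p)$ (by the Weil pairing), so any two such $\GL_2(p)$-extensions share at least $\mathbb{Q}(\zeta_p)\supsetneq\mathbb{Q}(\sqrt{p^\star})$. The idea is therefore to twist each extension by a suitable abelian extension so as to replace its ``$\mathbb{Q}(\zeta_p)$-part'' by a different cyclic degree-$(p-1)$ extension of $\mathbb{Q}$ that meets $\mathbb{Q}(\zeta_p)$ only in $\mathbb{Q}(\sqrt{p^\star})$.

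Concretely, I first use Corollary \ref{cor:exist} to produce infinitely many locally cyclic $\GL_2(p)$-extensions $L_1,L_2,\ldots$ of $\mathbb{Q}$, varying the underlying elliptic curves so that for every pair $i\neq j$, one of the primes of multiplicative reduction of $L_i$ is of good reduction for $L_j$; Remark \ref{rem:linear_disjointness_over_Qzeta_p} then yields $L_i\cap L_j=\mathbb{Q}(\zeta_p)$. I next apply Lemma \ref{lem:twist} individually to each $L_i$ with $G=\GL_2(p)$, $N=Z(G)\cong(\mathbb{Z}/p\mathbb{Z})^{\times}$ (the scalar matrices), $H=\PGL_2(p)$, and $K_i=L_i^{N}$. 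Since $K_i/\mathbb{Q}$ is locally cyclic (as a subextension of $L_i$), the lemma applies and produces a twist $\widetilde{L}_i=(L_i\widetilde{F}_i)^{D}$: again a locally cyclic $\GL_2(p)$-extension of $\mathbb{Q}$, where $\widetilde{F}_i/\mathbb{Q}$ is a cyclic $N$-extension tamely ramified at primes completely split in $L_i$, and $D\subset G\times N$ is the diagonal of $N$. I additionally arrange that the $\widetilde{F}_i$'s have pairwise distinct ramification primes, all distinct from those of the $L_j$'s (e.g.\ by taking $\widetilde{F}_i$ as the unique $N$-subextension of $\mathbb{Q}(\zeta_{q_i})$ for a fresh prime $q_i\equiv 1\pmod{p-1}$ split in all previously chosen $L_j$ and $\widetilde{F}_j$), which ensures $\widetilde{F}_i\cap\mathbb{Q}(\zeta_p)=\widetilde{F}_i\cap\widetilde{F}_j=\widetilde{F}_i\cap L_j=\mathbb{Q}$ for all relevant indices.

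The key step is then the computation $\widetilde{L}_i\cap\mathbb{Q}(\zeta_p)=\mathbb{Q}(\sqrt{p^\star})$. The maximal abelian subextension of $\widetilde{L}_i/\mathbb{Q}$ is $\widetilde{L}_i^{\SL_2(p)}$ (cyclic of degree $p-1$), and under $\Gal(\mathbb{Q}(\zeta_p)\widetilde{F}_i/\mathbb{Q})=(\mathbb{Z}/p)^{\times}\times N$ it equals $(\mathbb{Q}(\zeta_p)\widetilde{F}_i)^{D'}$, where $D'$ is the image of $D$. Since a scalar $\lambda I\in N$ acts on $\mathbb{Q}(\zeta_p)$ as its determinant $\lambda^{2}$, one has $D'=\{(\lambda^{2},\lambda):\lambda\in N\}$, and the subgroup generated by $D'$ and $\{1\}\times N$ is $((\mathbb{Z}/p)^{\times})^{2}\times N$ of index $2$, whose fixed field is $\mathbb{Q}(\sqrt{p^\star})$; hence $\widetilde{L}_i^{\SL_2(p)}\cap\mathbb{Q}(\zeta_p)=\mathbb{Q}(\sqrt{p^\star})$, and as $\widetilde{L}_i\cap\mathbb{Q}(\zeta_p)$ is abelian over $\mathbb{Q}$ and thus lies in $\widetilde{L}_i^{\SL_2(p)}$, the claim follows. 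Finally, the disjointness conditions on the $\widetilde{F}_i$ and $L_j$ yield (via a standard fiber-product calculation whose only nontrivial shared subfield is $L_i\cap L_j=\mathbb{Q}(\zeta_p)$) that $L_i\widetilde{F}_i\cap L_j\widetilde{F}_j=\mathbb{Q}(\zeta_p)$ for $i\neq j$; since $\widetilde{L}_i\subseteq L_i\widetilde{F}_i$, this gives $\widetilde{L}_i\cap\widetilde{L}_j\subseteq\mathbb{Q}(\zeta_p)\cap\widetilde{L}_i=\mathbb{Q}(\sqrt{p^\star})$, and the reverse inclusion is automatic. The only technical subtlety is simultaneously meeting all the ramification and disjointness requirements, handled by an inductive choice of the $q_i$.
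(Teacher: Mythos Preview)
Your approach is the paper's: both invoke Remark~\ref{rem:linear_disjointness_over_Qzeta_p} for disjointness over $\mathbb{Q}(\zeta_p)$ and then twist via Lemma~\ref{lem:twist} with $N=Z(\GL_2(p))$. The paper packages the reduction slightly differently---observing first that the $\PGL_2(p)$-subextensions $K_i=L_i^{N}$ already satisfy $K_i\cap K_j=\mathbb{Q}(\sqrt{p^\star})$ (since $K_i\cap\mathbb{Q}(\zeta_p)$ is the fixed field of $Z\cdot\SL_2(p)$, of index~$2$), and then twisting successively so that each $\widetilde{L}_i$ acquires a ramified prime unramified in all earlier $\widetilde{L}_j$, forcing $\widetilde{L}_i\cap\widetilde{L}_j\subseteq K_i$---whereas you compute $\widetilde{L}_i\cap\mathbb{Q}(\zeta_p)=\mathbb{Q}(\sqrt{p^\star})$ directly and bound $\widetilde{L}_i\cap\widetilde{L}_j$ inside $L_i\widetilde{F}_i\cap L_j\widetilde{F}_j=\mathbb{Q}(\zeta_p)$. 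These are two arrangements of the same Galois-theoretic content, and your explicit identification of $\widetilde{L}_i^{\SL_2(p)}$ with $(\mathbb{Q}(\zeta_p)\widetilde{F}_i)^{D'}$ is correct.

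There is, however, one genuine slip. Your parenthetical ``e.g.\ by taking $\widetilde{F}_i$ as the unique $N$-subextension of $\mathbb{Q}(\zeta_{q_i})$ for a fresh prime $q_i$'' is \emph{not} the $\widetilde{F}$ produced by Lemma~\ref{lem:twist}, and it will in general destroy local cyclicity of $\widetilde{L}_i$. The whole point of the two-prime construction in that lemma is to force every prime in $\mathcal{S}$ (the ramification locus of $L_i$) to split completely in $\widetilde{F}$. With a single-prime $\widetilde{F}_i$ there is no reason for $p\in\mathcal{S}$ to split there; if it does not, the decomposition group at $p$ in $\widetilde{L}_i$ is the image of $\langle\sigma\rangle\times\langle n\rangle\subset G\times N$ modulo the diagonal, i.e.\ the subgroup of $G$ generated by $\sigma$ together with the scalar $n=\Frob_p(\widetilde{F}_i)$. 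In the split-multiplicative case one has $\sigma=\left(\begin{smallmatrix} g & 0\\ 0 & 1\end{smallmatrix}\right)$ with $g$ a primitive root, and then $\langle\sigma,n\rangle$ is the set of diagonal matrices $\left(\begin{smallmatrix} a & 0\\ 0 & b\end{smallmatrix}\right)$ with $a$ arbitrary and $b\in\langle n\rangle$, which is cyclic only when $n=1$. So you must keep the full construction of Lemma~\ref{lem:twist} (choosing its auxiliary primes large and split in the previously built fields to meet your disjointness requirements), not the single-prime shortcut; with that correction your argument is complete.
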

\begin{proof}
It suffices to choose $N:=Z(GL_n(p)) \cong \mathbb{Z}/(p-1)\mathbb{Z}$, and recall that, via varying the primes $q_1, q_2, q_3$ in the above proof (cf.~Remark \ref{rem:linear_disjointness_over_Qzeta_p}), we may obtain infinitely many locally cyclic $GL_2(p)$-extensions, pairwise linearly disjoint over $\mathbb{Q}(\zeta_p)$, whence their $PGL_2(p)$-subextensions are pairwise linearly disjoint even over $\mathbb{Q}(\sqrt{p^\star})$. Successively replacing each of those $GL_2(p)$-extensions by a suitable ``twist" as provided by Lemma \ref{lem:twist} then yields the assertion.
\end{proof}
\bibliography{Bibliog}
\bibliographystyle{alpha}

\end{document}